\newtheorem{theorem}{Theorem}
\newtheorem{lemma}[theorem]{Lemma}
\newtheorem{definition}[theorem]{Definition}
\newtheorem{example}[theorem]{Example}
\newtheorem{corollary}[theorem]{Corollary}
\newtheorem{proposition}[theorem]{Proposition}
\newtheorem{assumption}[theorem]{Assumption}
\newtheorem{remark}[theorem]{Remark}
\newcommand{\hmin}{h_{\min}}
\newcommand{\hmax}{h_{\max}}
\newcommand{\expect}[1]{\mathbb{E}\left[  {#1} \right]}
\newcommand{\normEc}[1]{\mathcal{E}_c^2(#1)}
\newcommand{\Dt}{\Delta t}
\newcommand{\Dx}{{\Delta x}}
\newcommand{\DW}{\triangle W}
\newcommand{\vecu}{\vec{u}}
\newcommand\bc{\begin{center}}
\newcommand\ec{\end{center}}
\newcommand\bi{\begin{itemize}}
\newcommand\ei{\end{itemize}}
\newcommand\be{\begin{enumerate}}
\newcommand\ee{\end{enumerate}}
\newcommand\bd{\begin{definition}}
\newcommand\ed{\end{definition}}
\newcommand\bt{\begin{theorem}}
\newcommand\et{\end{theorem}}
\newcommand\bp{\begin{proposition}}
\newcommand\ep{\end{proposition}}
\newcommand\bcor{\begin{corollary}}
\newcommand\ecor{\end{corollary}}
\newcommand\bx{\begin{exercise}}
\newcommand\ex{\end{exercise}}
\newcommand\beg{\begin{example}}
\newcommand\eeg{\end{example}}
\newcommand\bl{\begin{lemma}}
\newcommand\el{\end{lemma}}
\newcommand\bea{\begin{eqnarray*}}
\newcommand\eea{\end{eqnarray*}}
\newcommand{\N}[1]{N^{(#1)}}
\newcommand{\GVII}{{\Gamma_{\varepsilon,m}}}
\newcommand{\noteB}[1]{{\color{black}{#1}}}
\begin{document}

\title[Adaptive methods for stochastic systems]{Adaptive Euler methods for stochastic systems with non-globally Lipschitz coefficients}

\author{C\'onall Kelly}
\email{conall.kelly@ucc.ie}
\address{School of Mathematical Sciences, University College Cork, Ireland.}

\author{Gabriel Lord}
\email{gabriel.lord@ru.nl}
\address{Department of Mathematics, IMAPP, Radboud University, Nijmegen, The Netherlands.}

\begin{abstract}
We present strongly convergent explicit and semi-implicit adaptive numerical schemes for systems of semi-linear stochastic differential equations (SDEs) where both the drift and diffusion are not globally Lipschitz continuous. Numerical instability may arise either from the stiffness of the linear operator or from the perturbation of the nonlinear drift under discretization, or both. Typical applications arise from the space discretisation of an SPDE, stochastic volatility models in finance, or certain ecological models.
Under conditions that include montonicity, we prove that a timestepping strategy which adapts the stepsize based on the drift alone is sufficient to control growth and to obtain strong convergence with polynomial order. The order of strong convergence of our scheme is $(1-\varepsilon)/2$, 
for $\varepsilon\in(0,1)$, where $\varepsilon$ becomes arbitrarily small as the number of finite moments available  for solutions of the SDE increases. Numerically, we compare the adaptive semi-implicit method to a fully drift-implicit method and to three other explicit methods. 
Our numerical results show that overall the adaptive semi-implicit method is robust, efficient, and well suited as a general purpose solver.
\end{abstract}

\keywords{Stochastic differential equations \and Adaptive timestepping \and Semi-implicit Euler method \and non-globally Lipschitz coefficients \and Strong convergence}

\subjclass{60H15 \and 60H35 \and 65C30}

\maketitle

\section{Introduction}\label{Sec:Intro}
Consider the $d$-dimensional semi-linear stochastic differential equation (SDE) of It\^o type
\begin{equation}
  dX(t)=[AX(t)+f(X(t)]dt+g(X(t))dW(t),\quad t\in[0,T];
\quad X(0) \in \mathbb{R}^d, \label{eq:SDE}
\end{equation}
where $T>0$, $A\in\mathbb{R}^{d\times d}$, $f:\mathbb{R}^d\to\mathbb{R}^d$, $g:\mathbb{R}^d\to\mathbb{R}^{d\times m}$, and $W$ is an $m$-dimensional Wiener process.
We suppose that the drift coefficient $f$ and the diffusion coefficient $g$ together satisfy polynomial bounds and a monotone condition permitting $g$ to grow superlinearly as long as that growth is countered sufficiently strongly by $f$. Global Lipschitz bounds are not required. For example, consider $f(x)=-x^2$ with $g(x)=x^{3/2}$ or $f(x)=-x^5$ with $g(x)=x^2$. Such applications arise in finance: for example the Lewis stochastic volatility model \cite{Lewis2000} which has a polynomial diffusion coefficient of order $3/2$. It was shown in \cite{HJ2011} that the explicit Euler-Maruyama method with constant stepsize fails to converge in the strong sense to solutions of \eqref{eq:SDE} if either the drift or the diffusion coefficients grow superlinearly. Also, as noted in \cite{Burrages2002}, fixed stepsize schemes may need to use very small stepsizes when the SDE being solved is stiff. We address these issues here by a semi-implicit scheme with adaptive timestepping.

In \cite{KeLo2016} a class of timestepping strategies, referred to as admissible, was motivated for the numerical discretisation of SDEs where the drift satisfies a one-sided Lipschitz coefficient and the diffusion satisfies a global Lipschitz bound. An admissible strategy uses the present value of the numerical trajectory to select the next timestep to avoid spuriously large drift responses. This is distinct from the error control approach in (for example) \cite{GainesLyons1997,Burrages2002,IJE2015}.

Timesteps selected by an admissible strategy are subject to upper and
lower limits $h_{\text{max}}$ and $h_{\text{min}}$ in a fixed ratio
$\rho$, with $h_{\text{max}}$ serving as a convergence parameter and
$h_{\text{min}}$ serving to ensure that the simulation completes in a
reasonable time. If the strategy attempts to select a timestep smaller
than $h_{\text{min}}$, then a backstop method is applied instead over
a single step of length $h_{\text{min}}$. It was proved in
\cite{KeLo2016} that the explicit Euler-Maruyama method over a random
mesh generated by an admissible timestepping strategy is strongly
convergent in $h_{\text{max}}$ with order $1/2$. The proof relied upon
$p^{th}$-moment bounds on the supremum of solutions of the underlying
SDE. Note also the adaptive
approach in \cite{Giles2017} which is consistent with the
admissibility condition of \cite{KeLo2016}.

Here, we examine more general SDEs and consider simultaneously both explicit and semi-implicit Euler-Maruyama schemes. Due to the monotone condition on the drift and diffusion terms, our analysis must contend with only a finite number of available bounded SDE moments (see for example the estimates provided by parts (i) and (ii) of Lemma \ref{LPSbook}). Unlike in \cite{KeLo2016}, we characterize precisely the backstop scheme and integrate it into the analysis in a way that is compatible with taking a random number of timesteps. 
In this way we show that a class of admissible timestepping strategies depending only on the drift response can be used to ensure that both the explicit and semi-implicit adaptive Euler-Maruyama schemes are strongly convergent to solutions of \eqref{eq:SDE} with order $(1-\varepsilon)/2$, in the sense that for any $\varepsilon\in(0,1)$, there exists $C_{\varepsilon}>0$, independent of $h_{\text{max}}$ such that 
\begin{equation*}
\mathbb{E}\left[\|X(T)-\widetilde Y_N\|^2\right]\leq C_{\varepsilon}h_{\text{max}}^{1-\varepsilon},
\end{equation*} 
where $\widetilde Y_N$ is value of the numerical scheme at time $T$, and $\|\cdot\|$ is the $l^2$ norm. The reduction in the order of strong convergence in our main result
(when compared to that in \cite{KeLo2016}) is a direct consequence of
the loss of global Lipschitz continuity in the diffusion
coefficient. If we reimpose global Lipschitz continuity on the
diffusion, we recover a strong convergence order of $1/2$, and if we
decompose the drift of \eqref{eq:SDE} so that $A=0$, we recover the
main result of \cite{KeLo2016}: see Remark \ref{rem:noStrongMom} for
more details of this.

The nature of the monotone condition is such that a timestepping scheme
which is admissible, and which can therefore successfully control the drift
response, will also be sufficient to control the diffusion response. 
It is well documented that 
the structure of the drift function (both linear and nonlinear) under
discretisation may have local dynamics that render the stability of
equilibria vulnerable to the effects of perturbation, either
stochastic or numerical \cite{AKMR,BK2012,HMS2002,HT1993,HJ2011}. 

Our method handles stiffness leading to potential instability in the discretisation in two distinct ways.  Where there is a classic (deterministic) stiff linear system we are able to treat this term implicitly without sacrificing numerical efficiency. Adaptive timestepping then treats nonlinear stability under stochastic perturbation. Thus, we deal with each source of potential instability separately, as would a stochastic IMEX-type method.  The use of an implicit approach to deal with the linear part of the drift avoids any consideration of potential interactions between it and the diffusion or between it and the nonlinear part of the drift. Note that the decomposition of the drift into the form $AX(t)+f(X(t))$ is determined by the modeller, and when $A=0$ the convergence analysis in this article applies equally to a fully explicit method if desired.

The literature already contains numerical schemes with fixed stepsizes that converge
strongly to solutions of SDEs with coefficients that satisfy local
Lipschitz and monotone conditions. Several of these extend the idea of
taming as introduced in \cite{HJK2012}, which rescales the functional
response of the drift coefficient in the scheme; they do so by
allowing the entire stochastic Euler map to be rescaled by some
combination of drift and diffusion responses. For example, see the
balanced method introduced in~\cite{TretyakovZhang}
and the variant presented in~\cite{Sabinas2013arxiv}, which
are both strongly convergent in this setting. 
The projected Euler method of \cite{beyn16} handles runaway trajectories by projecting them back onto a ball of radius inversely proportional to the step size; hence the authors control moments of the numerical solution.
It was shown in~\cite{MaoSzpruch2012} that a
drift-implicit discretisation could also ensure strong convergence in
our setting. \noteB{Finally we highlight \cite{HutzenthalerJentzen2}, which treats SDEs and SPDEs with non-globally monotone coefficients.}

In Section \ref{sec:num}, we compare the numerical performance of a selection of these methods
to that of the adaptive scheme presented in this article. We note this selection cannot be exhaustive and there are a growing number of variations; 
see for example \cite{HutzenthalerJentzen, Mao15,Mao16,Sabinas2016,SzpruchZhang,ZhangMa}. However our examples illustrate 
some of the drawbacks of fixed-step explicit schemes (when
linear stability is an issue) and where for fixed, relatively large $h$, the taming 
perturbation which imposes convergence may change the dynamics of the
solution. 
Compared to the fixed-step explicit methods, our numerical results show that the semi-implicit adaptive method gives consistently reliable numerical convergence results.
It is also more efficient than the drift-implicit scheme for SODEs, though this comparison is less clear for the SPDE example.

The structure of the article is as follows. In Section \ref{sec:setting} we describe the monotone condition and polynomial bounds that must be satisfied by $f$ and $g$, and provide the $p^{th}$-moment bounds satisfied by the solutions of \eqref{eq:SDE} within that framework. 
In Section \ref{sec:timestep}, we introduce the semi-implicit Euler-Maruyama method that, applied stepwise over a random mesh and combined with an appropriate backstop method, is the focus of the article. A mathematical definition for meshes produced by admissible timestepping strategies is provided, and conditional moment bounds for the SDE solution associated with these meshes are derived. 
In Section \ref{sec:main}, we present our main convergence result and state several technical lemmas, with proofs provided in Section \ref{app:technical}. 
In Section \ref{sec:num}, we carry out a comparative numerical investigation of strongly convergent schemes from the selection discussed above.

\section{Setting}\label{sec:setting}
Throughout the paper, $\mathbb{N}$ denotes the set $\{0,1,2,\ldots\}$, $\|\cdot\|$ denotes the $l^2$ norm of a
$d$-dimensional vector, $\|\cdot\|_F$ the Frobenius norm of a
$d\times m$-dimensional matrix, and for any $x\in\mathbb{R}^d$ and
$i=1,\ldots,m$, $g_i(x)$ denotes the $i^{\text{th}}$ column of the
diffusion coefficient matrix $g(x)$. For $a,b\in\mathbb{R}$ let $a
\vee b$ denote $\max\{a,b\}$. For any $A\in\mathbb{R}^{d\times d}$, we let $A^{1/2}\in\mathbb{C}^{d\times d}$ denote the matrix such that $(A^{1/2})^2=A$. Let $(\mathcal{F}_t)_{t\geq 0}$ be the natural filtration of $W$. To ensure the existence of a unique strong solution for \eqref{eq:SDE} \noteB{(in the sense of \cite[Definition 2.2.1]{Mao})} over the interval $[0,T]$, it suffices to place local Lipschitz and monotone conditions on $f$ and $g$:
\begin{assumption}\label{assum:locLipMon}
For each $R>1$ there exists $L_R>0$ such that
\begin{equation*}
\|f(x)-f(y)\|+\|g(x)-g(y)\|_F\leq L_R\|x-y\|,
\end{equation*}
for $x,y\in\mathbb{R}^d$ with $\|x\|\vee\|y\|\leq R$, and there exists $c\geq 0$ such that for some $p\geq 0$
\begin{equation}\label{eq:Mon}
\langle x-y,f(x)-f(y)\rangle+\frac{p+1}{2}\|g(x)-g(y)\|_F^2\leq c(\|x-y\|^2),\quad x,y\in\mathbb{R}^d.
\end{equation}
\end{assumption}
We also require a set of polynomial bounds on the derivatives of $f$ and $g$, and hence on $f$ and $g$ themselves. The minimum value of $p$ in \eqref{eq:Mon} required to prove our main strong convergence result depends on the order of these bounds.
\begin{assumption}\label{assum:polybounds}
Suppose $f:\mathbb{R}^d\to\mathbb{R}^d$ and $g:\mathbb{R}^d\to\mathbb{R}^{d\times m}$ are continuously differentiable with derivatives bounded as follows: for some $c_j,\gamma_0,\gamma_1\geq 0$; $j=1,\ldots,4$, 
we have
\begin{equation}\label{eq:fgDer}
\|Df(x)\|_F\leq c_1(1+\|x\|^{\gamma_0}),\quad \|Dg_i(x)\|_F\leq c_2(1+\|x\|^{\gamma_1}),\quad i=1,\ldots m,
\end{equation}
where $Df(x)\in\mathbb{R}^{d\times d}$ is the matrix of partial derivatives of $f$, and $Dg_i(x)\in\mathbb{R}^{d\times d}$ is the matrix of partial derivatives of the $i^{\text{th}}$ column of $g$, \noteB{and} 
\begin{equation}\label{eq:fgBound}
\|f(x)\|\leq c_3(1+\|x\|^{\gamma_0+1}),\quad \|g(x)\|_F\leq c_4(1+\|x\|^{\gamma_1+1}).
\end{equation}

\end{assumption}

We require that some of the moments of the solutions of \eqref{eq:SDE} are bounded over the interval $[0,T]$.
Further, \eqref{eq:Mon} in Assumption \ref{assum:locLipMon} implies (see, for example, Tretyakov \& Zhang~\cite{TretyakovZhang}) that there exists $c'\geq 0$ such that 
\begin{equation}\label{eq:pmonotone}
\langle x,f(x)\rangle+\frac{p-1}{2}\|g(x)\|_F^2\leq c'(1+\|x\|^2),\quad x\in\mathbb{R}^d.
\end{equation}
This is a special case of Khasminskii's condition using the Lyapunov-type function $V(x)=1+\|x\|^2$, and it guarantees the existence of a unique strong solution of \eqref{eq:SDE} over $[0,T]$ for any $T<\infty$ \noteB{(see \cite[Theorem 2.3.5]{Mao})}, while also ensuring $p^{th}$-moment bounds as follows:

\begin{lemma}\label{eq:momBoundExp}
Let $(X(t))_{t\in[0,T]}$ be the unique solution of \eqref{eq:SDE}. Suppose that \eqref{eq:pmonotone} holds for some $p\geq 2$ 
and \eqref{eq:fgBound} in Assumption \ref{assum:polybounds} holds, then there exists $M_{p,T}<\infty$ such that
\noteB{
\begin{equation}\label{eq:momStrong}
\mathbb{E}\left[\sup_{0\leq t\leq T}\|X(t)\|^{p-2\gamma_1}\right]\leq M_{p,T}.
\end{equation}
}

\end{lemma}
\begin{proof}
The proof of \eqref{eq:momStrong} follows from \cite[Lemma 4.2]{Mao16}, since the bound on $g$ provided by \eqref{eq:fgBound} implies Eq. (4.2) in that article, which we reproduce here as $$\|g(x)\|^2_F\leq K(1+\|x\|^r),\quad\text{for all}\quad x\in\mathbb{R}^d,$$ with $r=2\gamma_1+2$.
\end{proof}
To ensure sufficiently many bounded moments of the form \noteB{\eqref{eq:momStrong}} for our analysis to work, we now impose a lower bound on the value of $p$ in \eqref{eq:Mon} that depends on the order of the polynomial bounds on $f$ and $g$. This bound is associated with a tolerance parameter $\varepsilon$ which then appears in the the rate of strong convergence in Theorem \ref{thm:adaptConv}.
\begin{assumption}\label{assum:p}
Suppose that \eqref{eq:Mon} in Assumption \ref{assum:locLipMon} holds with
\noteB{\begin{equation}\label{eq:pLowerBound}
p\geq \max\left\{4\gamma_0,6\gamma_1\right\}+4+2^{q},
\end{equation}}
where $\gamma_0$ and $\gamma_1$ are as required in Assumption \ref{assum:polybounds}, and $\mathbb{N}\setminus\{0\}\ni q>1-\log_2\varepsilon$, where $\varepsilon\in(0,1)$ is a fixed tolerance parameter. 
\end{assumption}
Finally, note that the analysis in this article is also valid if the initial vector is random, $\mathcal{F}_0$-measurable, and $\mathbb{E}\|X(0)\|^p<\infty$.

\section{An adaptive semi-implicit Euler scheme with backstop}\label{sec:timestep}

The adaptive timestepping scheme under investigation in this article is based upon the semi-implicit Euler-Maruyama scheme over a random mesh $\{t_n\}_{n\in\mathbb{N}}$ on the interval $[0,T]$ given by
\begin{equation}\label{eq:Scheme}
 Y_{n+1}=Y_n+h_{n}AY_{n+1}+h_{n}f(Y_{n})+g(Y_n)\triangle W_{n+1},\quad n\in\mathbb{N},
\end{equation}
where $\triangle W_{n+1}:=W(t_{n+1})-W(t_n)$, and $\{h_n\}_{n\in\mathbb{N}}$ is a sequence of \noteB{positive} random timesteps and $\{t_n:=\sum_{i=1}^{n}h_{i-1}\}_{n\in\mathbb{N}\setminus\{0\}}$ with $t_0=0$. 
For the setting described in Section \ref{sec:setting}, we show that,
in order to ensure strong convergence with order $(1-\varepsilon)/2$
of the method \eqref{eq:Scheme} for any $\varepsilon\in(0,1)$, it is
sufficient to construct the stepsize sequence
$\{h_n\}_{n\in\mathbb{N}}$ in the same way as in \cite{KeLo2016}, demonstrating the applicability of this strategy to a significantly broader class of \noteB{SDEs}. 
We review the construction now.
\begin{definition}\label{def:filtration}
Suppose that each member of  $\{t_n:=\sum_{i=1}^{n}h_{i-1}\}_{n\in\mathbb{N}\setminus\{0\}}$, with $t_0=0$, is an $\mathcal{F}_t$-stopping time: i.e. $\{t_n\leq t\}\in\mathcal{F}_t$ for all $t\geq 0$, where $(\mathcal{F}_t)_{t\geq 0}$ is the natural filtration of $W$. \noteB{The filtration $(\mathcal{F}_t)_{t\geq 0}$ can be extended (see \cite{Mao}) to 
any $\mathcal{F}_t$-stopping time $\tau$ by
\[
\mathcal{F}_{\tau}:=\{B\in\mathcal{F}\,:\,B\cap\{\tau\leq t\}\in\mathcal{F}_t\}.
\]
In particular this allows us to condition on $\mathcal{F}_{t_n}$ at any point on the random time-set $\{t_n\}_{n\in\mathbb{N}}$.}
\end{definition}

\begin{remark}
Throughout the article, the index of a random sequence reflects its $\mathcal{F}_{t_n}$-measurability. For example, consider the timestep sequence $\{h_n\}_{n\in\mathbb{N}}$: each $h_n$ is $\mathcal{F}_{t_n}$-measurable. The only exception to this is $\{t_n\}_{n\in\mathbb{N}}$, since each $t_n$ is $\mathcal{F}_{t_{n-1}}$-measurable.
\end{remark}
\begin{assumption}\label{assum:h}
Suppose that each $h_n$ is $\mathcal{F}_{t_{n}}$-measurable, \noteB{and that there are constant minimum and maximum stepsizes $h_{\min}$ and $h_{\max}$ imposed in a fixed ratio $\rho$ so that  
\begin{equation}\label{eq:rho}
0<h_{\min}\leq h_n\leq h_{\max}<1,\quad h_{\text{max}}=\rho h_{\text{min}}.
\end{equation}}
\end{assumption}
\noteB{
\begin{definition}\label{def:N}
For each $t\in[0,T]$, define the random integer $\N{t}$ such that
\[
\N{t}:=\max\{n\in\mathbb{N}\setminus\{0\}\,:\,t_{n-1}<t\}.
\] 
Set $N:=\N{T}$ and $t_N:=T$, so that $T$ is always the last point on the mesh.
\end{definition}

We note that $\N{t}$ is a.e. the index of the right endpoint of the step that contains $t$, i.e. $t\in[t_{\N{t}-1},t_{\N{t}}]$ a.s. Both $t_{\N{t}}$ and $t_{\N{t}-1}$ are $\mathcal{F}_t$-stopping times, and $\N{t}$ is supported on the finite set $\{\N{t}_{\min},\ldots,\N{t}_{\max}\}$, where 
\begin{equation}\label{eq:Nminmax}
\N{t}_{\min}:=\lfloor t/h_{\max}\rfloor
\quad\text{and}\quad \N{t}_{\max}:=\lceil t/h_{\min}\rceil.
\end{equation}}
\begin{remark}\label{rem:conditionalMoments}
In \eqref{eq:Scheme}, note that each $\triangle W_{n+1}=W(t_{n+1})-W(t_n)$ is taken over a random step of length $h_{n}=h_{n}(Y_n)$ and which depends on $\{W(s),\,s\in[0,t_n]\}$ through $Y_n$. Therefore $\triangle W_{n+1}$ is a function of values of the Wiener process up to time $t_n$, is not independent of $\mathcal{F}_{t_{n}}$, and there is no reason to expect that $\triangle W_{n+1}\sim\mathcal{N}(0,h_{n} I_{d\times d})$, where $I_{d\times d}$ is the identity matrix. 
However since $t_{n+1}$ is a bounded
$\mathcal{F}_{t_{n}}$-stopping time then, by Doob's optional sampling theorem (see, for example, \cite{Shiryaev96}),
\begin{equation}\label{eq:WcondMoments}
\mathbb{E}\left[\triangle W_{n+1}\big|\mathcal{F}_{t_{n}}\right]=\mathbf{0},\quad
\mathbb{E}\left[\|\triangle W_{n+1}\|^2\big|\mathcal{F}_{t_{n}}\right]=h_{n},\quad a.s.
\end{equation}
\end{remark}
In our main analysis, we use the following lemma on the boundedness of the moments of solutions of \eqref{eq:SDE} conditioned at points on our adaptive mesh. The proof is a modification of that of \cite[Theorem 2.4.1]{Mao}.
\begin{lemma}\label{lem:condSDEMoments}
Let $(X(t))_{t\in[0,T]}$ be a unique solution of \eqref{eq:SDE}, and suppose that \eqref{eq:pmonotone} holds for some $p\geq 2$. Then there exist constants $\nu_1$ and $\nu_2$ such that
\begin{equation*}
\expect{\|X(t)\|^p|\mathcal{F}_{t_n}}\leq \nu_1+\nu_2\|X(t_n)\|^p,\quad t\geq t_n\quad a.s.
\end{equation*}
\end{lemma}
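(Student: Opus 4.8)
The plan is to run, under the conditional expectation $\expect{\cdot\mid\mathcal{F}_{t_n}}$, the same Lyapunov computation that yields the unconditional $p^{th}$-moment bounds in \cite[Theorem~4.1]{Mao}, using the stopping-time structure of \Cref{def:filtration} to legitimise the conditioning. First I would fix $n$ and observe that, because $t_n$ is an $\mathcal{F}_t$-stopping time, the shifted process $s\mapsto X(t_n+s)$ again solves \cref{eq:SDE}, now driven by $\widetilde W(s):=W(t_n+s)-W(t_n)$ --- a Brownian motion independent of $\mathcal{F}_{t_n}$ --- and started from the $\mathcal{F}_{t_n}$-measurable value $X(t_n)$. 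Thus $\expect{\cdot\mid\mathcal{F}_{t_n}}$ plays the role of the unconditional expectation for the SDE restarted at the (now frozen) datum $X(t_n)$, and it suffices to bound $\expect{\phi(X(t))\mid\mathcal{F}_{t_n}}$ for $t\ge t_n$, where $V(x):=1+\|x\|^2$ and $\phi:=V^{p/2}$.

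Next I would introduce the localising stopping times $\tau_R:=\inf\{t\ge t_n:\|X(t)\|\ge R\}$. It\^o's formula applied to $\phi(X(t\wedge\tau_R))$ on $[t_n,t]$ shows that its drift integrand equals
\[
pV(X)^{p/2-1}\Bigl(\langle X,AX+f(X)\rangle+\tfrac12\|g(X)\|_F^2+\tfrac{p-2}{2}\tfrac{\|g(X)^{\top}X\|^2}{V(X)}\Bigr).
\]
Using $\|g(X)^{\top}X\|^2\le\|X\|^2\|g(X)\|_F^2\le V(X)\|g(X)\|_F^2$, the bracket is at most $\langle X,AX\rangle+\langle X,f(X)\rangle+\tfrac{p-1}{2}\|g(X)\|_F^2$, which by $\langle x,Ax\rangle\le\|A\|_F\|x\|^2$ and \cref{eq:pmonotone} is bounded above by $(\|A\|_F+c')V(X)$; hence the drift integrand is at most $\widehat c\,\phi(X)$ with $\widehat c:=p(\|A\|_F+c')$. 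It follows that $t\mapsto e^{-\widehat c\,t}\phi(X(t\wedge\tau_R))$ is an $(\mathcal{F}_t)$-supermartingale: its stochastic-integral part, stopped at $\tau_R$, is a genuine martingale because its integrand is bounded on $\{\|X\|\le R\}$, while its drift is nonpositive. Optional sampling at the stopping time $t_n$ (noting $\tau_R\ge t_n$) then gives
\[
\expect{\phi(X(t\wedge\tau_R))\mid\mathcal{F}_{t_n}}\le e^{\widehat c\,(t-t_n)}\phi(X(t_n)),\qquad t\ge t_n,\quad a.s.,
\]
and letting $R\to\infty$ via conditional Fatou yields $\expect{\phi(X(t))\mid\mathcal{F}_{t_n}}\le e^{\widehat c\,T}\phi(X(t_n))$ for all $t\in[t_n,T]$, a.s.

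Finally I would pass back using $\|x\|^p\le V(x)^{p/2}$ and, since $p\ge2$, $V(x)^{p/2}\le 2^{p/2-1}(1+\|x\|^p)$, which give the claim with $\varpi_1=\varpi_2=2^{p/2-1}e^{pT(\|A\|_F+c')}$. The Lyapunov estimate itself is routine; the step demanding care is the conditioning --- confirming that $X(t_n)$ furnishes a legitimate $\mathcal{F}_{t_n}$-measurable restart value and that the localised stochastic integral has zero $\mathcal{F}_{t_n}$-conditional expectation (equivalently, that optional sampling applies to the supermartingale at the stopping time $t_n$). Both rest on the optional sampling theorem together with the definition of $\mathcal{F}_{t_n}$ in \Cref{def:filtration}, and this is precisely where the present argument departs from the one in \cite{Mao}; everything else transcribes with $\mathbb{E}$ replaced by $\mathbb{E}[\,\cdot\mid\mathcal{F}_{t_n}]$ throughout.
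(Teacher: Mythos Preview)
Your proposal is correct and follows precisely the approach the paper indicates: the paper does not give a detailed proof but states that it is ``a modification of that of \cite[Theorem~4.1]{Mao}'', and you have supplied exactly that modification---running Mao's Lyapunov/It\^o argument for $V(x)^{p/2}$ under $\expect{\cdot\mid\mathcal{F}_{t_n}}$, with the strong Markov restart at $t_n$ (equivalently, optional sampling) handling the conditioning at the random time. Your identification of the one nontrivial point---justifying that the localised stochastic integral has zero $\mathcal{F}_{t_n}$-conditional expectation via optional sampling and \Cref{def:filtration}---is on target, and the explicit constants you obtain are consistent with the bound the paper uses downstream.
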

We are now in a position to define the scheme which is the subject of this article, and which combines a semi-implicit Euler scheme over an adaptive mesh, generated according to an admissible timestepping strategy, with a backstop method.
\begin{definition}\label{def:finalScheme}
Define the map $\theta:\mathbb{R}^d\times\mathbb{R}^d\times\mathbb{R}^{m}\times\mathbb{R}^{+}\to\mathbb{R}^d$ such that
\[
\theta(x,y,z,h):=x+hAy+hf(x)+g(x)z,
\]
so that, if $\{Y_n\}_{n\in\mathbb{N}}$ is defined by the semi-implicit scheme \eqref{eq:Scheme}, then $$Y_{n+1}=\theta(Y_n,Y_{n+1},\triangle W_{n+1},h_{n}),\quad n\in\mathbb{N}.$$ 
Then we define a \emph{semi-implicit Euler scheme with backstop} as the sequence $\{\widetilde Y_n\}_{n\in\mathbb{N}}$ by 
\begin{multline}\label{eq:finalScheme}
\widetilde Y_{n+1}= \theta(\widetilde Y_{n},\widetilde Y_{n+1},\triangle W_{n+1},h_{n})\cdot\mathcal{I}_{\{h_{\text{min}}<h_{n}\leq h_{\text{max}}\}}\\
+\varphi(\widetilde Y_{n},\widetilde Y_{n+1},\triangle W_{n+1},h_{\text{min}})\cdot\mathcal{I}_{\{h_{n}= h_{\text{min}}\}},
\end{multline}
where $\{h_n\}_{n\in\mathbb{N}}$ satisfies the conditions of Assumption \ref{assum:h}. 
The map $\varphi:\mathbb{R}^d\times\mathbb{R}^d\times\mathbb{R}^{m}\times\mathbb{R}^+\to\mathbb{R}^d$ \noteB{characterises the form a user-selected backstop method. We require that}
\begin{multline}\label{eq:backstopError2}
\expect{\left\|\varphi(\widetilde Y_{n},\widetilde Y_{n+1},\triangle W_{n+1},h_{\text{min}})-X(t_{n+1})\right\|^2\big|\mathcal{F}_{t_n}}-\|\widetilde Y_{n}-X(t_{n})\|^2\\
\leq C_1 h_{\text{min}}^{2-\varepsilon}+ C_2 h_{\text{min}}\left\|\widetilde Y_{n}-X(t_{n})\right\|^2,\quad n\in\mathbb{N},\quad a.s., 
\end{multline}
for some positive constants $C_1$ and $C_2$, and $\varepsilon\in(0,1)$ the fixed parameter from Assumption \ref{assum:p}.
\end{definition}
\begin{remark}
Note that $\varphi$ will satisfy \eqref{eq:backstopError2} if the
backstop method is subject to a mean-square
consistency requirement that bounds the propagation of discretisation error over a single step. 
In practice, rather than
checking \eqref{eq:backstopError2} directly, we use as our backstop a
method that is known to be strongly convergent of order $1/2$ in this
setting: for the numerical experiments in Section \ref{sec:num} we use the
balanced method introduced by Tretyakov \&
Zhang~\cite{TretyakovZhang}, which satisfies a similar local accuracy
bound (see \cite[Eq. (2.9)]{TretyakovZhang}) \noteB{and corresponds to the map
\begin{equation}\label{eq:varphiBalanced}
\varphi(x,y,z,h)=x+\frac{f(x)h+\sqrt{h}g(x)z}{1+h\|f(x)\|+\sqrt{h}\sum_{i=1}^{m}\|g_i(x)z_i\|}.
\end{equation}
Examples of $h_{\min}$, $h_{\max}$, and $\rho$ for the scheme \eqref{eq:finalScheme} with backstop characterised by \eqref{eq:varphiBalanced} are given in Section 5.}
\end{remark}

Finally, we define the admissible timestepping
strategy (see also \cite{KeLo2016}).
\begin{definition}\label{def:hAdmiss}
Let $\{\widetilde Y_n\}_{n\in\mathbb{N}}$ be a solution of \eqref{eq:finalScheme} where $f$ and $g$ satisfy the conditions of Assumptions \ref{assum:locLipMon}, \ref{assum:polybounds}, and \ref{assum:p}. We say that $\{h_n\}_{n\in\mathbb{N}}$ is an \emph{admissible timestepping strategy} for \eqref{eq:finalScheme} if Assumption \ref{assum:h} is satisfied and there \noteB{exist} real non-negative constants $R_1, R_2<\infty$, \noteB{independent of $h_{\max}$}, such that whenever $h_{\text{min}}<h_{n}\leq h_{\text{max}}$,
\begin{equation}\label{eq:normfgBounds}
\|f(\widetilde Y_n)\|^2\leq R_1+R_2\|\widetilde Y_n\|^2,\quad n=0,\ldots,N-1.
\end{equation}
\end{definition}
For example (see \cite{KeLo2016}) the timestepping rule given by
\begin{equation*}
h_{n}=\hmax \times
\min \left\{
\max \left\{
\frac{1}{\|f(\widetilde Y_n)\|},\frac{\|\widetilde Y_n\|}{\|f(\widetilde Y_n)\|},\rho
\right\},1 \right\}
\end{equation*}
is admissible for \eqref{eq:finalScheme}.  Choosing the larger of $1/\|f(\widetilde Y_n)\|$ and $\|\widetilde Y_n\|/\|f(\widetilde Y_n)\|$
helps maximize the stepsize while maintaining its admissibility.
The backstop is needed since it may not always be possible to  control $\widetilde Y_n$ via timestep so that \eqref{eq:normfgBounds} holds. See Section \ref{sec:concl} for a more detailed comment.

\section{Strong convergence of the adaptive scheme}\label{sec:main}
\subsection{Preliminary lemmas}
These lemmas provide a regularity bound in time and an estimate on remainder terms from Taylor expansions of $f$ and $g$. Proofs are given in Section \ref{app:technical}.
\begin{lemma}
\label{lem:Xregularity}
Let $(X(t))_{t\in[0,T]}$ be a solution of \eqref{eq:SDE} with coefficients $f$ and $g$ satisfying the conditions of Assumptions \ref{assum:locLipMon}, \ref{assum:polybounds}, and \ref{assum:p}, and suppose that the sequence of random times $\{t_n\}_{n\in\mathbb{N}}$ is as in Definition \ref{def:filtration} and satisfies the conditions of Assumption \ref{assum:h}. Then for all $n\in\mathbb{N}$ and $p\geq 2$, there exists an a.s. finite and $\mathcal{F}_{t_n}$-measurable random variable $\bar{L}_{p,n}$ so that 
\begin{equation}\label{eq:holdBound}
\expect{\|X(s)-X(t_n)\|^p\big|\mathcal{F}_{t_n}}\leq \bar{L}_{p,n}|s-t_n|^{p/2},\quad \noteB{s\in[t_n,t_{n+1}]},\quad a.s.
\end{equation}
\end{lemma}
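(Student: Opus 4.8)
The plan is to work from the integral form of \cref{eq:SDE} on the interval $[t_n,s]$, valid for $s\geq t_n$,
\[
X(s)-X(t_n)=\int_{t_n}^{s}\bigl(AX(u)+f(X(u))\bigr)\,du+\int_{t_n}^{s}g(X(u))\,dW(u),
\]
and to estimate the $p^{th}$ conditional moment of the two summands separately after the elementary split $\|a+b\|^p\leq 2^{p-1}(\|a\|^p+\|b\|^p)$.

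For the drift integral I would apply H\"older's inequality in time to bound it by $(s-t_n)^{p-1}\int_{t_n}^{s}\|AX(u)+f(X(u))\|^p\,du$, then use $\|AX(u)+f(X(u))\|\leq\|A\|\,\|X(u)\|+c_3(1+\|X(u)\|^{\gamma_0+1})\leq C(1+\|X(u)\|^{\gamma_0+1})$ from \cref{eq:fgBound}, take the conditional expectation, and invoke \lemref{lem:condSDEMoments} to replace $\expect{\|X(u)\|^{p(\gamma_0+1)}\mid\mathcal F_{t_n}}$ by $\varpi_1+\varpi_2\|X(t_n)\|^{p(\gamma_0+1)}$. This last step is legitimate because \cref{eq:Mon}, and hence \cref{eq:pmonotone}, holds at every exponent up to the one fixed in \Cref{assum:p}, and that assumption is exactly what keeps the powers $p(\gamma_0+1)$ and $p(\gamma_1+1)$ appearing here within the range of bounded moments. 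Since $|s-t_n|\leq T$ under \Cref{assum:h}, the factor $(s-t_n)^p$ is dominated by $T^{p/2}|s-t_n|^{p/2}$ and absorbed into the constant.

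For the diffusion integral the essential tool is a \emph{conditional} Burkholder--Davis--Gundy inequality. Since $t_n$ is an $\mathcal F_t$-stopping time, the process $r\mapsto\int_{t_n}^{t_n+r}g(X(u))\,dW(u)$ is a continuous local martingale, null at $r=0$, with respect to the shifted filtration $(\mathcal F_{t_n+r})_{r\geq 0}$, whose initial $\sigma$-algebra is $\mathcal F_{t_n}$. Conditioning BDG on $\mathcal F_{t_n}$ and then applying H\"older's inequality in time (where $p\geq 2$ enters) gives
\[
\expect{\Bigl\|\int_{t_n}^{s}g(X(u))\,dW(u)\Bigr\|^p\,\Big|\,\mathcal F_{t_n}}\leq C_p\,(s-t_n)^{p/2-1}\int_{t_n}^{s}\expect{\|g(X(u))\|_F^p\mid\mathcal F_{t_n}}\,du,
\]
and \cref{eq:fgBound} together with \lemref{lem:condSDEMoments} at exponent $p(\gamma_1+1)$ finishes the estimate exactly as for the drift.

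Combining the two bounds proves the claim with $\bar L_{p,n}:=C_{p,T}\bigl(1+\|X(t_n)\|^{p(\max\{\gamma_0,\gamma_1\}+1)}\bigr)$; this is a.s.\ finite and $\mathcal F_{t_n}$-measurable because $X$ is continuous and adapted while $t_n$ is a stopping time, so that $X(t_n)$ is $\mathcal F_{t_n}$-measurable. I expect the main technical point to be the careful justification of the conditional Burkholder--Davis--Gundy estimate at the random time $t_n$ --- the adaptedness and optional-stopping bookkeeping for the time-shifted martingale --- together with the moment counting that keeps $p(\gamma_0+1)$ and $p(\gamma_1+1)$ within the range of bounded conditional moments guaranteed by \Cref{assum:p}; the remaining manipulations are routine applications of H\"older's and Jensen's inequalities.
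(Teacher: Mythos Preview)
Your proposal is correct and follows essentially the same route as the paper: start from the integral form of \cref{eq:SDE}, split drift and diffusion via $\|a+b\|^p\leq 2^{p-1}(\|a\|^p+\|b\|^p)$, apply H\"older in time to the drift and a conditional Burkholder--Davis--Gundy/moment inequality to the stochastic integral (the paper cites \cite[Theorem~7.1]{Mao} for this), then invoke \cref{eq:fgBound} and \Cref{lem:condSDEMoments} at exponents $p(\gamma_0+1)$ and $p(\gamma_1+1)$ to obtain an $\mathcal F_{t_n}$-measurable bound of the stated form. Your identification of the conditional BDG at the stopping time $t_n$ and the moment-counting against \Cref{assum:p} as the only nontrivial points is accurate.
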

\noteB{
Now consider the Taylor expansions of $f$ and $g_i$, $i=1,\ldots,m$:
\begin{eqnarray*}
f(X(s))&=&f(X(t_n))+R_f(s,t_n,X(t_n));\\
g_i(X(s))&=&g_i(X(t_n))+R_{g_i}(s,t_n,X(t_n)),
\end{eqnarray*}
where the remainders $R_f$ and $R_{g_i}$ are given in integral form by
\begin{equation*}
\label{eq:Rf}
R_{z}(s,t_n,X(t_n)):= \int_0^1 Dz(X(t_n) + \tau(X(s)-X(t_n)))(X(s)-X(t_n)) d\tau,
\end{equation*}
and $z$ can be taken to read either $f$ or $g_i$. 
Furthermore let 
\begin{eqnarray*}
R_A(s,t_n,X(t_n)):= A[X(t_{n+1})-X(s)].
\end{eqnarray*}
We now give $\mathcal{F}_{t_n}$-conditional mean-square bounds on the integrals of these remainder terms. }
\begin{lemma}
  \label{LPSbook}
Let $(X(t))_{t\in[0,T]}$ be a solution of \eqref{eq:SDE} with coefficients $f$ and $g$
satisfying the conditions of Assumptions \ref{assum:locLipMon}, \ref{assum:polybounds}, and \ref{assum:p}. Let $\{t_n\}_{n\in\mathbb{N}}$ be as in Definition \ref{def:filtration}, satisfying the conditions of Assumption \ref{assum:h}. 

Then for any $\varepsilon\in(0,1)$ there is an a.s. finite and $\mathcal{F}_{t_n}$-measurable random variable $\bar{\Lambda}_{\varepsilon,n}>0$, and a constant $\Lambda_{\varepsilon}<\infty$, the latter independent of $\{h_{n}\}_{n\in\mathbb{N}}$ and $h_{\text{max}}$, such that
\begin{align*}
(i) &\quad
\expect{\left\|\int_{t_n}^{t_{n+1}}R_{f}(s,t_n,X(t_n))ds\right\|^{2}\bigg|\mathcal{F}_{t_n}}
 \leq  \bar{\Lambda}_{\varepsilon,n}h_{n}^{3-\varepsilon},\quad a.s;\\
(ii) &\quad \expect{\left\|\int_{t_n}^{t_{n+1}}R_{g_i}(s,t_n,X(t_n))dW(s)\right\|^{2}\bigg|\mathcal{F}_{t_n}}
 \leq  \bar{\Lambda}_{\varepsilon,n}h_{n}^{2-\varepsilon},\quad a.s;\\
\noteB{ (iii)} &\quad \noteB{\expect{\left\|\int_{t_n}^{t_{n+1}}R_{A}(s,t_n,X(t_n))dW(s)\right\|^{2}\bigg|\mathcal{F}_{t_n}}
 \leq  \bar{\Lambda}_{\varepsilon,n}h_{n}^{3-\varepsilon},\quad a.s;}\\
(iv) &\quad  \mathbb{E}[\bar{\Lambda}_{\varepsilon,n}]\leq \Lambda_{\varepsilon}.
\end{align*}
\end{lemma}

\begin{remark}
The notational convention used in Part (\noteB{iv}) of Lemma \ref{LPSbook} is extended throughout the paper to $\mathcal{F}_{t_n}$-adapted sequences for which there exists a finite uniform upper bound on the expectation of each term.
\end{remark}

\subsection{Main results}
In this section, we provide a lemma giving a bound on the one-step error for the combined scheme, followed by our main theorem which uses discrete Gronwall inequalities to produce an order of strong convergence.
\begin{lemma}\label{lem:main}
Let $(X(t))_{t\in[0,T]}$ be a solution of
\eqref{eq:SDE} with drift and diffusion coefficients $f$ and $g$
satisfying the conditions of Assumptions \ref{assum:locLipMon}, \ref{assum:polybounds}, and \ref{assum:p}. Let
$\{\widetilde Y_n\}_{n\in\mathbb{N}}$ be a solution of \eqref{eq:finalScheme} with initial value $\widetilde Y_0=X_0$ and admissible timestepping strategy $\{h_n\}_{n\in\mathbb{N}}$ satisfying the conditions of Assumption \ref{assum:h} and Definition \ref{def:hAdmiss}. 

Define the error sequence $\{E_n\}_{n\in\mathbb{N}}$ by $E_{n}:=\widetilde Y_{n}-X(t_{n})$. Then there exist 
a.s. finite and 
$\mathcal{F}_{t_n}$-measurable random variables $\bar\Lambda_{\varepsilon,n}, \bar\Gamma_{2,n}, \bar\Gamma_{3,n}^{(m)}$ with finite expectations independent of $n$, denoted $\Lambda_\varepsilon$, $\Gamma_2$, and $\Gamma_3^{(m)}$ respectively, such that
 \begin{multline}\label{eq:preGronwalls2}
 \expect{\|E_{n+1}\|^2|\mathcal{F}_{t_n}}-\|E_n\|^2\leq h_{\text{max}}Q\expect{\|E_{n+1}\|^2|\mathcal{F}_{t_n}}
  +h_{\text{max}}\Gamma_1\|E_n\|^2\\+h_{\text{max}}^2\bar\Gamma_{2,n}+\bar\Gamma_{3,n}^{(m)}h_{\text{max}}^{2-\varepsilon}+126 \bar\Lambda_{\varepsilon,n} h_{\text{max}}^{3-\varepsilon},\quad n\in\mathbb{N},
 \end{multline}
where $\varepsilon\in(0,1)$ is the fixed parameter from Assumption \ref{assum:p} \noteB{and constants $Q,\Gamma_1$ are given by
\begin{eqnarray}\label{def:Q}
Q&:=&\|A^{1/2}\|_F^2+\frac{3}{2}\|A\|_F^2;\\
\Gamma_1&:=&(2c+20R_2+2)\vee C_2,\label{eq:barGamma1}
\end{eqnarray}
where $c,R_2,C_2$ satisfy \eqref{eq:Mon} in Assumption \ref{assum:locLipMon}, \eqref{eq:backstopError2} in Definition \ref{def:finalScheme}, and \eqref{eq:normfgBounds} in Definition \ref{def:hAdmiss} respectively.}
\end{lemma}
\begin{proof}
For $h_{n}$ selected at time $t_n$, for some $n\in\mathbb{N}$, by an
admissible timestepping strategy, there are two possible cases
(denoted (I) and (II)), first, $h_{\text{min}}<h_{n}\leq
h_{\text{max}}$ and second, $h_{n}= h_{\text{min}}$. We consider each in turn. 

(I) In this case we rely on the bound \eqref{eq:normfgBounds} provided by the admissibility of the timestepping scheme. 
When $h_{\text{min}}<h_{n}\leq h_{\text{max}}$, $\widetilde Y_{n+1}$ is derived from $\widetilde Y_n$ using \eqref{eq:Scheme}, and we have
\begin{eqnarray*}
E_{n+1}&:=&\widetilde Y_n-X(t_n)+\int_{t_n}^{t_{n+1}}A[\widetilde Y_{n+1}-X(s)]ds+\int_{t_n}^{t_{n+1}}[f(\widetilde Y_n)-f(X(s))]ds\\
&&\qquad+\sum_{i=1}^m\int_{t_n}^{t_{n+1}}[g_i(\widetilde Y_n)-g_i(X(s))]dW_i(s).
\end{eqnarray*}
Expand $f$ and $g$ as Taylor series around $X(t_n)$ over the interval of
integration, and write
\begin{eqnarray*}
A[\widetilde Y_{n+1}-X(s)]&=&A[\widetilde Y_{n+1}-X(t_{n+1})]+A[X(t_{n+1})-X(s)]\\
&:=&AE_{n+1}+R_A(s,t_n,X(t_n)).
\end{eqnarray*}
Therefore
\begin{multline*}
E_{n+1}=E_n+\int_{t_n}^{t_{n+1}}AE_{n+1}ds+\int_{t_n}^{t_{n+1}}[f(\widetilde Y_n)-f(X(t_n))]ds\\+\sum_{i=1}^m\int_{t_n}^{t_{n+1}}[g_i(\widetilde Y_n)-g_i(X(t_n))]dW_i(s)
 +\underbrace{\int_{t_n}^{t_{n+1}}R_A(s,t_n,X(t_n))ds}_{:=\tilde{R}^A_{n+1}}\\ +\underbrace{\int_{t_n}^{t_{n+1}}R_f(s,t_n,X(t_n))ds}_{:=\tilde{R}^f_{n+1}}+\sum_{i=1}^m\underbrace{\int_{t_n}^{t_{n+1}}R_{g_i}(s,t_n,X(t_n))dW_i(s)}_{:=\tilde{R}^{g_i}_{n+1}},
\end{multline*}
which is
\begin{multline*}
E_{n+1}=E_n+h_{n}AE_{n+1}+h_{n}[f(\widetilde Y_n)-f(X(t_n))]\\
+[g(\widetilde Y_n)-g(X(t_n))]\triangle W_{n+1}+\tilde{R}^A_{n+1}+\tilde{R}^f_{n+1}+\sum_{i=1}^m\tilde{R}^{g_i}_{n+1}.
\end{multline*}
Let $Q$ be as defined in \eqref{def:Q}.
Then, using that $h_{\text{max}}\leq 1$ and the
inequality $2\langle x,y\rangle\leq \|x\|^2+\|y\|^2$, we find
\begin{multline*}
\|E_{n+1}\|^2\leq \|E_n\|^2+h_{n}Q\|E_{n+1}\|^2\\
+\underbrace{2h_{n}\langle f(\widetilde Y_n)-f(X(t_n),E_n\rangle+5\|[g(\widetilde Y_n)-g(X(t_n))]\triangle W_{n+1}\|^2}_{:=A_{n+1}}\\
+\underbrace{5h_{n}^2\|f(\widetilde Y_n)-f(X(t_n))\|^2}_{:=B_n}\\
+\underbrace{2\langle E_n,\tilde{R}^A_{n+1}+\tilde{R}^f_{n+1}\rangle+7\left\|\tilde{R}^A_{n+1}+\tilde{R}^f_{n+1}+\sum_{i=1}^m\tilde{R}^{g_i}_{n+1}\right\|^2}_{:=C_{n+1}}\\
+2 \sum_{i=1}^m\langle E_n,\tilde R^{g_i}_{n+1}\rangle+4h_{n}\left\langle f(\widetilde Y_n)-f(X(t_n)),[g(\widetilde Y_n)-g(X(t_n))]\triangle W_{n+1}\right\rangle\\
+2\left\langle E_n,[g(\widetilde Y_n)-g(X(t_n))]\triangle W_{n+1}\right\rangle.
\end{multline*}

We develop bounds on $\expect{A_{n+1}|\mathcal{F}_{t_n}}$, $\expect{B_n|\mathcal{F}_{t_n}}$, $\expect{C_{n+1}|\mathcal{F}_{t_n}}$ in turn. The terms \noteB{after $C_{n+1}$ on the RHS of the inequality} have zero conditional expectation, by \eqref{eq:WcondMoments} in  Remark \ref{rem:conditionalMoments}, and the fact that $E_n$ and each $\tilde R_{g_i}$ are conditionally independent with respect to $\mathcal{F}_{t_n}$,  with the latter an It\^o integral with zero conditional expectation.

By \eqref{eq:Mon} in Assumption \ref{assum:locLipMon},
\begin{eqnarray*}
\noteB{\expect{A_{n+1}|\mathcal{F}_{t_n}}}&\leq& 2h_{n}\langle f(\widetilde Y_n)-f(X(t_n)),E_n\rangle+5h_{n}\noteB{\|g(\widetilde Y_n)-g(X(t_n))\|_F^2}\\
&\leq&2ch_{n}\|E_n\|^2,\quad a.s.
\end{eqnarray*}

By \eqref{eq:normfgBounds} in Definition \ref{def:hAdmiss}, and \eqref{eq:fgBound} in Assumption \ref{assum:polybounds} we have
\begin{eqnarray*}
\lefteqn{\expect{B_n|\mathcal{F}_{t_n}}=B_n}\\
&=&5h_{n}^2\|f(\widetilde Y_n)-f(X(t_n))\|^2\\
&\leq&10h_{n}^2(\|f(\widetilde Y_n)\|^2+\|f(X(t_n))\|^2)\\
&\leq&10h_{n}^2(R_1+2R_2(\|E_n\|^2+\|X(t_n)\|^2)+4c_3^2(1+\|X(t_n)\|^{2\gamma_0+2}))\\
&=&20h_{n}^2R_2\|E_n\|^2\\
&&\qquad+10h_{n}^2(R_1+2R_2\|X(t_n)\|^2+4c_3^2(1+\|X(t_n)\|^{2\gamma_0+2})),\quad a.s.
\end{eqnarray*}
Next, by Jensen's inequality and part (i) of Lemma \ref{LPSbook}, 
\begin{align*}
\expect{\langle E_n,\tilde{R}^f_{n+1}\rangle|\mathcal{F}_{t_n}}&\leq \|E_n\|\expect{\|\tilde{R}^f_{n+1}\||\mathcal{F}_{t_n}}\leq \|E_n\|\sqrt{\expect{\|\tilde{R}^f_{n+1}\|^2|\mathcal{F}_{t_n}}}\\
&\leq \|E_n\|\sqrt{\bar\Lambda_{\varepsilon,n}}h_{n}^{(3-\varepsilon)/2}\leq \frac{1}{2}h_{n}\|E_n\|^2+\frac{1}{2}\bar \Lambda_{\varepsilon,n} h_{n}^{2-\varepsilon},\,\, a.s.
\end{align*}
We also have from part (iii) of Lemma \ref{LPSbook} $\expect{\langle E_n,\tilde{R}^A_{n+1}\rangle|\mathcal{F}_{t_n}}\leq\frac{1}{2}h_{n}\|E_n\|^2+\frac{1}{2}\bar\Lambda_{\varepsilon,n} h_{n}^{2-\varepsilon}$ a.s.
Applying parts (i)-(iii) of Lemma \ref{LPSbook} then gives
\begin{eqnarray*}
\expect{C_{n+1}|\mathcal{F}_{t_n}}&=&\expect{2\langle E_n,\tilde{R}^A_{n+1}+\tilde{R}^f_{n+1}\rangle+7\left\|\tilde{R}^A_n+\tilde{R}^f_{n+1}+\sum_{i=1}^{m}
\tilde{R}^{g_i}_{n+1}\right\|^2\bigg|\mathcal{F}_{t_n}}\\
&\leq& 2h_{n}\|E_n\|^2+(2+63m^2) \bar\Lambda_{\varepsilon,n} h_{n}^{2-\varepsilon}+126\bar \Lambda_{\varepsilon,n} h_{n}^{3-\varepsilon},\quad a.s. 
\end{eqnarray*}

Therefore we have
\begin{eqnarray*}
\lefteqn{\expect{\|E_{n+1}\|^2|\mathcal{F}_{t_n}}-\|E_n\|^2}\nonumber\\
&\leq& h_{n}Q\expect{\|E_{n+1}\|^2|\mathcal{F}_{t_n}}+h_{n}\left(2c+20h_{\text{max}}R_2+2\right)\|E_n\|^2\\
&&+10h_{n}^2\left(R_1+2R_2\|X(t_n)\|^2+4c_3^2(1+\|X(t_n)\|^{2\gamma_0+2})\right)\\
&&+(2+63m^2)\bar \Lambda_{\varepsilon,n} h_{n}^{2-\varepsilon}+126\bar \Lambda_{\varepsilon,n}h_{n}^{3-\varepsilon},\quad a.s.
\end{eqnarray*}

(II) Suppose that $h_{n}= h_{\text{min}}$. Here $\widetilde Y_{n+1}$ is
generated from $\widetilde Y_n$ via an application of the backstop method
over a single step of length $h_{\text{min}}$. This corresponds to a
single application of the map $\varphi$ and therefore the relation
\eqref{eq:backstopError2} is satisfied a.s.  

To combine the two cases (I) and (II), define  
the a.s. finite and $\mathcal{F}_{t_n}$-measurable random variables  
\begin{eqnarray}
\label{eq:barGamma2}
\bar\Gamma_{2,n}&:=&10\left(R_1+2R_2\|X(t_n)\|^2+4c_3^2(1+\|X(t_n)\|^{2\gamma_0+2})\right),\\
\bar\Gamma_{3,n}^{(m)}&:=&(2+63m^2) \bar\Lambda_{\varepsilon,n}\vee C_1,
\end{eqnarray}
where $C_1$ and $C_2$ are as given in \eqref{eq:backstopError2}. 
Since $Q,\bar\Lambda_{\varepsilon,n}\geq 0$ (the latter in the a.s. sense), we have for any $h_{n}$ selected by an admissible adaptive timestepping strategy, 
\begin{multline}\label{eq:consistency}
\expect{\|E_{n+1}\|^2|\mathcal{F}_{t_n}}-\|E_n\|^2\leq h_{n}Q\expect{\|E_{n+1}\|^2|\mathcal{F}_{t_n}}
+h_{n} \Gamma_{1}\|E_n\|^2\\+h_{n}^2\bar\Gamma_{2,n}+\bar\Gamma_{3,n}^{(m)} h_{n}^{2-\varepsilon}+126 \bar\Lambda_{\varepsilon,n} h_{n}^{3-\varepsilon},\quad a.s.
\end{multline}
Note that since $(a\vee b)\leq a+b$ when $a,b\geq 0$, by \eqref{eq:barGamma1} we may apply \eqref{eq:momStrong} to \eqref{eq:barGamma2} to show that, under Assumption \ref{assum:p},
\[
\expect{\bar\Gamma_{2,n}}\leq 10\left(R_1+2R_2\noteB{M_{2+2\gamma_1,T}}+4c_3^2\left(1+\noteB{M_{2(\gamma_0+\gamma_1)+2,T}}\right)\right)=:\Gamma_2<\infty,
\]
where $\noteB{M_{2+2\gamma_1,T}}$ and $\noteB{M_{2(\gamma_0+\gamma_1)+2,T}}$ are finite constants satisfying \noteB{\eqref{eq:momStrong}} for $p=2,2\gamma_0+2$ respectively. It can be similarly shown \noteB{under Assumption \ref{assum:p} that there exist finite constants $\Gamma_{3}^{(m)},\Lambda_{\varepsilon}$ such that $\expect{\bar\Gamma_{3,n}^{(m)}}\leq \Gamma_3^{(m)}$ and $\expect{\bar\Lambda_{\varepsilon,n}}\leq \Lambda_{\varepsilon}$}.
\end{proof}

The bound \eqref{eq:consistency} characterises the propagation of error in mean-square over a single step of the combined semi-implicit Euler scheme with backstop \eqref{eq:finalScheme}, and holds regardless of whether or not the timestepping strategy requires an application of the semi-implicit scheme or the backstop scheme.
\noteB{
\begin{assumption}\label{assum:extrahbound}
Finally, $h_{\max}$ is chosen so that there exists a constant $\delta\in[0,1]$ that does not depend on $h_{\max}$, such that 
\begin{equation}\label{eq:hmaxbound}
h_{\max}<\min\left\{\frac{\delta}{2\rho(Q+\Gamma_1)},\frac{1-\delta}{Q}\right\},
\end{equation}
where $Q$ is defined in \eqref{def:Q} and $\Gamma_1$ is defined in \eqref{eq:barGamma1}. It follows that there exists $\gamma<\infty$ such that
\begin{equation}\label{eq:deltagamma}
\frac{1}{1-h_{\max} 2\rho(Q+\Gamma_1)/\delta}<\gamma.
\end{equation}
\end{assumption}
Although these conditions are required in the proof of Theorem \ref{thm:adaptConv} we have observed no practical implications in our numerical experiments.

\begin{definition}\label{def:Ec}
Define an a.s. continuous process $(\normEc{t})_{t\in[0,T]}$ pathwise as the a.e. linear interpolant of $\|E_n\|^2$ and $\|E_{n+1}\|^2$ on each interval $[t_n,t_{n+1}]$ for $n=0,\ldots,N-1$: 
\begin{equation}\label{eq:interpolant}
\normEc{s}:=\frac{t_{n+1}-s}{h_n}\|E_n\|^2+\frac{s-t_n}{h_n}\|E_{n+1}\|^2,\quad s\in[t_n,t_{n+1}],\quad a.s.
\end{equation} 
\end{definition}

The accumulation of error in mean-square for \eqref{eq:finalScheme}, and hence the order of strong convergence, can now be estimated.
}
\begin{theorem}\label{thm:adaptConv}
Let $(X(t))_{t\in[0,T]}$ be a solution of
\eqref{eq:SDE} with drift and diffusion coefficients $f$ and $g$
satisfying the conditions of Assumptions \ref{assum:locLipMon}, \ref{assum:polybounds}, and \ref{assum:p}. Let
$\{\widetilde Y_n\}_{n\in\mathbb{N}}$ be a solution of  \eqref{eq:finalScheme} in Definition \ref{def:finalScheme},
with initial value $\widetilde Y_0=X_0$ and admissible timestepping strategy $\{h_n\}_{n\in\mathbb{N}}$ satisfying the conditions of Definition \ref{def:hAdmiss} and Assumption \ref{assum:extrahbound}. Then, if $\varepsilon\in(0,1)$ is the fixed parameter from Assumption \ref{assum:p}, there exists \noteB{$C_{\varepsilon,m,\rho,T}>0$}, independent of $h_{\text{max}}$ such that 
\noteB{
\begin{equation*}
\max_{t\in[0,T]}\mathbb{E}\left[\normEc{t}\right]\leq C_{\varepsilon,m,\rho,T}h_{\text{max}}^{1-\varepsilon},
\end{equation*}
where $\normEc{t}$ is as defined in Definition \ref{def:Ec}, 
and in particular 
\[
\left(\expect{\|X(T)-\widetilde{Y}_N\|^2}\right)^{1/2}\leq C_{\varepsilon,m,\rho,T}^{1/2}\hmax^{(1-\varepsilon)/2},
\]
where $N$ is as given in Definition \ref{def:N}.}
\end{theorem}

\begin{remark}
By setting $A=0$ in \eqref{eq:SDE} we obtain strong convergence of identical order of a
backstopped fully explicit Euler-Maruyama adaptive method.
\end{remark}

\begin{proof}[Theorem \ref{thm:adaptConv}].
\noteB{Fix $t\in[0,T]$, and let $\N{t}$ be as in Definition \ref{def:N}}. Multiply both sides of \eqref{eq:consistency} by the indicator variable $\mathcal{I}_{\{\noteB{\N{t}}\geq n+1\}}$ to get
 \begin{multline*}
 \expect{\|E_{n+1}\|^2|\mathcal{F}_{t_n}}\mathcal{I}_{\{\noteB{\N{t}}\geq n+1\}}-\|E_n\|^2\mathcal{I}_{\{\noteB{\N{t}}\geq n+1\}}\\
 \leq  {h_{n}}Q\expect{\|E_{n+1}\|^2|\mathcal{F}_{t_n}}\mathcal{I}_{\{\noteB{\N{t}}\geq n+1\}}+{h_{n}}\Gamma_{1}\|E_n\|^2\mathcal{I}_{\{\noteB{\N{t}}\geq n+1\}}\\
 +{h_{n}}^2\bar\Gamma_{2,n}\mathcal{I}_{\{\noteB{\N{t}}\geq n+1\}}+\bar\Gamma_{3,n}^{(m)} {h_{n}}^{2-\varepsilon}\mathcal{I}_{\{\noteB{\N{t}}\geq n+1\}}\\+126 \bar\Lambda_{\varepsilon,n} {h_{n}}^{3-\varepsilon}\mathcal{I}_{\{\noteB{\N{t}}\geq n+1\}},\quad a.s.
 \end{multline*}
  Since $\noteB{t_{\N{t}}}$ is a $\mathcal{F}_{t}$-stopping time, the event $\{\noteB{\N{t}}\leq n\}\in\mathcal{F}_{t_n}$ and therefore the indicator variable $\mathcal{I}_{\{\noteB{\N{t}}\geq n+1\}}$ is $\mathcal{F}_{t_n}$-measurable. Thus we have
 \begin{multline*}
 \expect{\|E_{n+1}\|^2\mathcal{I}_{\{\noteB{\N{t}}\geq n+1\}}|\mathcal{F}_{t_n}}-\|E_n\|^2\mathcal{I}_{\{\noteB{\N{t}}\geq n+1\}}\\
 \leq  h_{n}Q\expect{\|E_{n+1}\|^2\mathcal{I}_{\{\noteB{\N{t}}\geq n+1\}}|\mathcal{F}_{t_n}}+h_{n} \Gamma_{1}\|E_n\|^2\mathcal{I}_{\{\noteB{\N{t}}\geq n+1\}}\\
 +{h_{n}}^2\bar\Gamma_{2,n}\mathcal{I}_{\{\noteB{\N{t}}\geq n+1\}}+\bar\Gamma_{3,n}^{(m)} {h_{n}}^{2-\varepsilon}\mathcal{I}_{\{\noteB{\N{t}}\geq n+1\}}\\+126 \bar\Lambda_{\varepsilon,n} {h_{n}}^{3-\varepsilon}\mathcal{I}_{\{\noteB{\N{t}}\geq n+1\}},\quad a.s.
 \end{multline*}
 Since $\{\noteB{\N{t}}\geq n+1\}\subset \{\noteB{\N{t}}\geq n\}$, we have $\mathcal{I}_{\{\noteB{\N{t}}\geq n+1\}}(\omega)\leq\mathcal{I}_{\{\noteB{\N{t}}\geq n\}}(\omega)$ for all $\omega\in\Omega$. Take expectations on both sides, and since ${h_{n}}\leq h_{\text{max}}$ we have
 \begin{multline}\label{eq:preGronwalls3}
 \expect{\|E_{n+1}\|^2\mathcal{I}_{\{\noteB{\N{t}}\geq n+1\}}}-\expect{\|E_n\|^2\mathcal{I}_{\{\noteB{\N{t}}\geq n+1\}}}\\
 \leq  h_{\text{max}}Q\expect{\|E_{n+1}\|^2\mathcal{I}_{\{\noteB{\N{t}}\geq n+1\}}}+h_{\text{max}} \Gamma_{1}\expect{\|E_n\|^2\mathcal{I}_{\{\noteB{\N{t}}\geq n\}}}\\
 +h_{\text{max}}^2\Gamma_{2}+\Gamma_{3}^{(m)} h_{\text{max}}^{2-\varepsilon}+126 \Lambda_{\varepsilon} h_{\text{max}}^{3-\varepsilon},\quad a.s.
 \end{multline}
 
Now sum both sides of \eqref{eq:preGronwalls3} over $n=0,\ldots,\noteB{\N{t}_{\max}}-1$, \noteB{where $\N{t}_{\max}$ is the deterministic index in 
\eqref{eq:Nminmax}, to get (using the bound $t\leq T$)}
\begin{multline}\label{eq:noCDGI1}
\expect{\|E_\noteB{\N{t}}\|^2}=\expect{\|E_\noteB{\N{t}}\|^2\mathcal{I}_{\noteB{\N{t}}\geq \noteB{\N{t}}}}\\
\leq h_{\max}Q\sum_{n=0}^{\noteB{\N{t}_{\max}}-1}\expect{\|E_{n+1}\|^2\mathcal{I}_{\{\noteB{\N{t}}\geq n+1\}}}\\+h_{\max}\Gamma_1\sum_{n=0}^{\noteB{\N{t}_{\max}}-1}\expect{\|E_{n}\|^2\mathcal{I}_{\{\noteB{\N{t}}\geq n\}}}\\
+h_{\max}\rho T\Gamma_2+h_{\max}^{1-\varepsilon}\rho T\Gamma_3^{(m)}+126\Lambda_{\varepsilon}h_{\max}^{1-\varepsilon}\rho T.
\end{multline}
Bringing the sum inside the expectation on the RHS of \eqref{eq:noCDGI1} yields
\begin{multline}\label{eq:noCDGI2}
\expect{\|E_\noteB{\N{t}}\|^2}\leq\expect{h_{\max}Q\sum_{n=0}^{\noteB{\N{t}}-1}\|E_{n+1}\|^2+h_{\max}\Gamma_1\sum_{n=0}^{\noteB{\N{t}}-1}\|E_n\|^2}\\
+h_{\max}\rho T\Gamma_2+h_{\max}^{1-\varepsilon}\rho T\Gamma_3^{(m)}+126\Lambda_{\varepsilon}h_{\max}^{1-\varepsilon}\rho T.
\end{multline}
By a change in index in the second sum on the RHS of \eqref{eq:noCDGI2} (and since $\|E_0\|^2=0$ a.e.) we can write
\begin{multline}\label{eq:noCDGI3}
\expect{\|E_\noteB{\N{t}}\|^2}\leq\expect{h_{\max}Q\|E_\noteB{\N{t}}\|^2}+\expect{h_{\max}(Q+\Gamma_1)\sum_{n=0}^{\noteB{\N{t}}-1}\|E_n\|^2}\\
+h_{\max}\rho T\Gamma_2+h_{\max}^{1-\varepsilon}\rho T\Gamma_3^{(m)}+126\Lambda_{\varepsilon}h_{\max}^{1-\varepsilon}\rho T.
\end{multline}
\noteB{
Subtracting from both sides the first term on the RHS of \eqref{eq:noCDGI3}, and dividing through by $1-h_{\max}Q>\delta$ (which holds by \eqref{eq:hmaxbound} in Assumption \ref{assum:h}), we get
\begin{equation}
\expect{\|E_{\N{t}}\|^2}
\leq\frac{1}{\delta}h_{\max}(Q+\Gamma_1)\expect{\sum_{n=0}^{\N{t}-1}\|E_n\|^2} + \hmax^{1-\varepsilon}\GVII,\label{eq:noCDGI4}
\end{equation}
where
$\GVII:=\frac{\rho T}{\delta}\left(\Gamma_2+\Gamma_3^{(m)}+126\Lambda_{\varepsilon}\right)$, using $\hmax^{\varepsilon}<1$ by Assumption \ref{assum:h}.

It follows from \eqref{eq:interpolant} in Definition \ref{def:Ec} that a.s., 
\[
(t_{n+1}-s)\|E_n\|^2\leq h_n \normEc{s},\quad s\in[t_n,t_{n+1}],
\] 
and therefore by integration 
\begin{equation}\label{eq:discToCont}
h_{\min}\|E_n\|^2\leq 2\int_{t_n}^{t_{n+1}}\normEc{s}ds,\quad a.s.
\end{equation}
The a.s. continuity of $(\normEc{t})_{s\in[0,T]}$ implies the continuity and therefore boundedness over $[0,T]$ of $\expect{\normEc{t}}$.
Combining \eqref{eq:noCDGI4} and \eqref{eq:discToCont}, and using that $\hmax=\rho\hmin$, we get
\begin{eqnarray}
\lefteqn{\expect{\|E_{\N{t}}\|^2}}\nonumber\\
&\leq& \frac{2\rho}{\delta}(Q+\Gamma_1)\expect{\int_{0}^{t}\normEc{s}ds + \int_{t}^{t_{\N{t}}}\normEc{s}ds}
+ \hmax^{1-\varepsilon}\GVII\nonumber\\
&\leq& \frac{2\rho}{\delta}(Q+\Gamma_1)\expect{\int_{0}^{t}\normEc{s}ds}\nonumber\\ &&\qquad\qquad+ 
\frac{2\rho}{\delta}(Q+\Gamma_1)\expect{\int_{t}^{t_{\N{t}}}\normEc{s}ds}
+ \hmax^{1-\varepsilon}\GVII.
\label{eq:NtGronwallPreMomentBound}
\end{eqnarray}
Similarly
\begin{eqnarray}
\expect{\|E_{\N{t}-1}\|^2}
&\leq& \frac{2\rho}{\delta}(Q+\Gamma_1)\expect{\int_{0}^{t}\normEc{s}ds}\nonumber \\
& & - \frac{2\rho}{\delta}(Q+\Gamma_1)\expect{\int_{\N{t}-1}^{t}\normEc{s}ds}+\hmax^{1-\varepsilon}\GVII\nonumber\\
&\leq& \frac{2\rho}{\delta}(Q+\Gamma_1)\expect{\int_{0}^{t}\normEc{s}ds} 
+ \hmax^{1-\varepsilon}\GVII.
\label{eq:Nt-1GronwallPreMomentBound}
\end{eqnarray}
By \eqref{eq:interpolant} for all $s\in[t_{\N{t}-1},t_{\N{t}}]$ a.e.,
\begin{equation}\label{eq:e2sumBound}
\normEc{s} \leq \max\{\|E_{\N{t}-1}\|^2,\|E_{\N{t}}\|^2\}\leq \|E_{\N{t}-1}\|^2+\|E_{\N{t}}\|^2.
\end{equation}
Sum both sides of \eqref{eq:NtGronwallPreMomentBound} and \eqref{eq:Nt-1GronwallPreMomentBound} and then use \eqref{eq:e2sumBound} to get
\begin{eqnarray*}
\lefteqn{\expect{\|E_{\N{t}-1}\|^2}+\expect{\|E_{\N{t}}\|^2}}\\
&\leq&\frac{4\rho}{\delta}(Q+\Gamma_1)\expect{\int_{0}^{t}\normEc{s}ds}+\frac{2\rho}{\delta}(Q+\Gamma_1)\expect{\int_{t}^{t_{\N{t}}}\normEc{s}ds}+2\hmax^{1-\varepsilon}\GVII\\
&=&\frac{4\rho}{\delta}(Q+\Gamma_1)\expect{\int_{0}^{t}\normEc{s}ds}\\
&&+\frac{2\rho}{\delta}(Q+\Gamma_1)h_{\max}\left(\expect{\|E_{\N{t}-1}\|^2}+\expect{\|E_{\N{t}}\|^2}\right)+2\hmax^{1-\varepsilon}\GVII\\
\end{eqnarray*}
We can write, by \eqref{eq:deltagamma},
\begin{multline*}
\lefteqn{\expect{\|E_{\N{t}-1}\|^2}+\expect{\|E_{\N{t}}\|^2}}\\
\leq\frac{2\rho(Q+\Gamma_1)/\delta}{1-h_{\max}2\rho(Q+\Gamma_1)/\delta}\expect{\int_{0}^{t}\normEc{s}ds}+2\hmax^{1-\varepsilon}\GVII\frac{1}{1-h_{\max }2\rho(Q+\Gamma_1)/\delta}.
\end{multline*}
Therefore since $t\in[t_{\N{t}-1},t_{\N{t}}]$ and by the lower bound in \eqref{eq:e2sumBound}, 
\begin{equation}\label{eq:noCDGI5}
\expect{\normEc{t}}
\leq\frac{4\rho(Q+\Gamma_1)}{\delta}\gamma\expect{\int_{0}^{t}\normEc{s}ds}+2\hmax^{1-\varepsilon}\gamma\GVII,\quad t\in[0,T],
\end{equation}
where $\gamma$ is given by \eqref{eq:deltagamma} in Assumption \ref{assum:extrahbound}.

Since \eqref{eq:noCDGI5} holds for all $t\in[0,T]$, an application of the integral form of Gronwall's inequality (e.g. \cite[Theorem 8.1]{Mao}) gives, for each $t\in[0,T]$,
\begin{equation*}
\expect{\normEc{t}}\leq \hmax^{1-\varepsilon}\left(2\gamma\GVII\right)
\exp\left(\frac{4\rho(Q+\Gamma_1)}{\delta}\gamma T\right).
\end{equation*}
The statement of the Theorem follows with
$$C_{\varepsilon,m,\rho,T}:=\left(2\gamma\GVII\right)
\exp\left(\frac{4\rho(Q+\Gamma_1)}{\delta}\gamma T\right).$$
}
\end{proof}
\section{A comparative numerical review of some available schemes}\label{sec:num}

Given the semi-linear SDE
\begin{equation}\label{eq:numSDE}
du = \left[Au + f(u)\right] dt + G(u) dW,
\end{equation}
with solution $u:[0,T]\times \Omega \to \noteB{\mathbb{R}^d}$,
we compare our semi-implicit adaptive numerical method to a number of different fixed-step schemes (with time step $h$) that we outline below. Some numerical examples for
an explicit adaptive scheme are given in \cite{KeLo2016}.
The majority of recent developments concentrate on a perturbation of
the flow (or solution) of order $h^{1/2}$
or higher, however the
first method we present is the classic implicit approach.
We do not consider an exhaustive list of taming-type schemes and there
are other variants available, see for example
\cite{HutzenthalerJentzen,Mao15,Mao16,Sabinas2016,SzpruchZhang,ZhangMa}. 
Our examples illustrate
some of the drawbacks of explicit schemes, for example where linear
stability is an issue.

\begin{enumerate}[label={\arabic*.}, wide, labelindent=0pt]
\item{\bf Drift implicit scheme \cite{MaoSzpruch2012}}
This is given for \eqref{eq:numSDE} by 
$$Y_{n+1}=Y_n + \Dt (A Y_{n+1}+f(Y_{n+1})) + g(Y_n) \DW_{n+1}.$$
Although strong convergence has been proved (see \cite{MaoSzpruch2012}), at each step a nonlinear system of the form 
\begin{equation}\label{eq:nonlinSolve}
0=Y_{n+1}- \Dt (A Y_{n+1}+f(Y_{n+1})) + b
\end{equation}
needs to be solved for $Y_{n+1}$  for some vector $b$. Even for
the deterministic case there is no guarantee the nonlinear solver
will converge to the correct root (see \cite[Chapter 4]{StuartHumphries}).
We observe in our numerical experiments that both a 
standard Newton method and the {\sc matlab} nonlinear solver {\tt fsolve}
(or {\tt fzero} in one-dimension) may fail to converge. In the event of a step where this occurs we use
as a backstop an alternative explicit method, in this article taken to
be the balanced method (see below). The drift implicit scheme with this backstop method is denoted by {\tt Drift Implicit} in the figures of this Section. 
Finally, note that for several examples in this section the implicit solver may be made more efficient by exploiting a known closed-form solution for the nonlinear system \eqref{eq:nonlinSolve}. Such solutions are not in general available and so we do not make use of them in our comparative analysis here. 

\item{\bf Tamed \cite{Sabinas2013arxiv}}
A tamed version which may be used when the solutions of \eqref{eq:numSDE} have a limited number of finite moments \cite{ZhangMa}
$$Y_{n+1}=Y_n+ \frac{\Dt AY_n + f(Y_n) + \sum_{j=1}^m g_j(Y_n)\DW} 
{1+ \Dt^{\beta} \|AY_n+f(Y_n)\| + \sum_{j=1}^m \|g_j(Y_n)\|\Dt^{\beta}}.$$
Strong convergence of order $1/2$ is achieved by setting $\beta=1/2$. We denote this method {\tt Tamed}. 

\item{\bf Balanced Method \cite{TretyakovZhang}} is given for \eqref{eq:numSDE} by
$$Y_{n+1}= Y_n + \frac{\Dt (AY_n + f(Y_n)) + \sum_{r=1}^m g_r(Y_n)
  \DW_{r,n+1}}{1+\Dt\|AY_n+f(Y_n)\|+\sum_{r=1}^m \|g_r(Y_n)\DW_{r,n+1}\|}.$$
This was proved to be strongly convergent with order $1/2$ (including
for additive noise) and is denoted in the figures of this Section as {\tt BM}.

\item{\bf Projected EM \cite{beyn16}}  uses the standard EM method when $Y_n$ is inside a ball of radius inversely proportional to the step step size. However, outside of the ball, the numerical solution is projected onto the ball. We have for $Z:=\min(1,Y_n/\sqrt{h}\|Y_n\|)$
$$
Y_{n+1} = Z+h(AZ+f(Z) + \sum_{r=1}^m g_r(Z)\Delta W_{r,n+1}.
$$
This is denoted below as {\tt Projected EM}.

\end{enumerate}

We provide a comparative illustration of the combined effect of semi-implicitness and adaptivity using five
examples ranging from geometric Brownian motion to a system of SDEs arising
from the spatial discretisation of an SPDE. Recall that our use of a
semi-implicit method controls instabilities from a linear 
operator and the adaptive timestepping controls the discretisation
of the nonlinear structure.
Stiffness is manifested in the structure of each of these equations in
different ways: ranging from the linearity only (in geometric Brownian
motion) to both in the linear operator and nonlinearities for a
discretisation of an SPDE.

To examine strong convergence in $\hmax$ for the SDE examples below we solve with $M=1000$ samples to estimate the root mean square error (RMSE) at a final time $T=Nh=1$, $\sqrt{\expect{\|X(T)-X_N\|^2}}$ and we estimate the standard deviation from $20$ groups of $50$ samples included on the error plots. Reference solutions are computed with $10^6$ uniform steps on $[0,T]$.
For efficiency we compare the RMSE against the average computing time over the $1000$ samples (denoted cputime). Unless otherwise stated we take $\rho=10$ throughout.

\subsection{Geometric Brownian Motion}\label{Sec:GBM}
The classic example to illustrate linear mean square stability is
geometric Brownian motion 
\begin{equation}\label{eq:GBM}
du(t) = ru(t) dt + \sigma u(t) dW(t), \quad u(0)=u_0,\quad t\geq 0.
\end{equation}
If $r+\sigma^2/2<0$ it is straightforward to see that
$\expect{(u(t)^2}\to 0$ as $t\to \infty$ and that the (fixed step)
explicit Euler method is only stable if $0<\Dt<-2(r+\sigma^2/2)/r^2$.
The drift and diffusion are both linear functions, so there is
no need for either taming or adaptivity to control growth from a nonlinear term; indeed  
in this example the semi-implicit adaptive and fully drift implicit schemes co-coincide if
$A=r$ and $f(u)=0$.

\begin{figure}
  \begin{center}
    (a) \hspace{0.48\textwidth} (b)\\
    \includegraphics[width=0.48\textwidth,height=0.25\textheight]{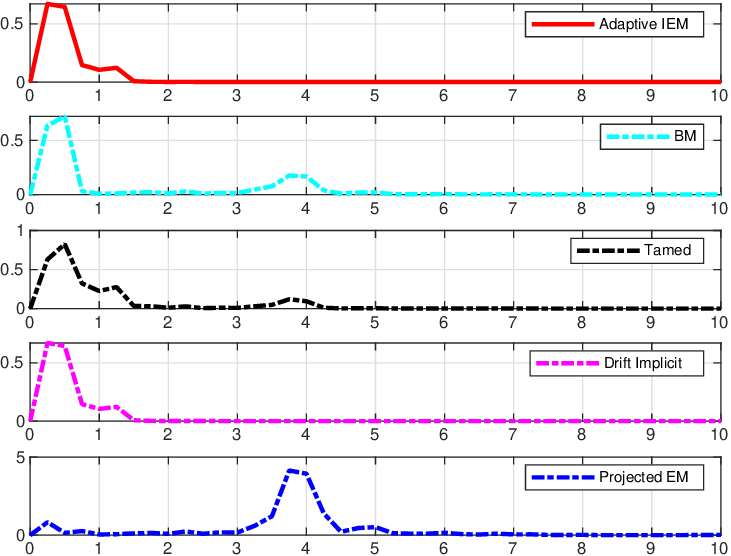}
    \includegraphics[width=0.48\textwidth,height=0.25\textheight]{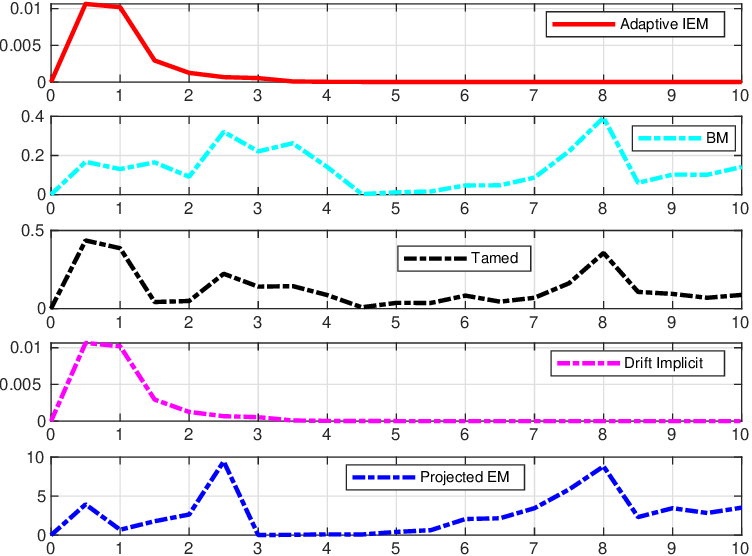}
    \caption{Error $|u(t_n)-Y_n|^2$ plotted against $t_n\in[0,10]$ in two sample paths for geometric Brownian motion \eqref{eq:GBM} with (a) $\hmax=0.25$ and (b)
      $\hmax=0.5$.}\label{fig:GBM}
  \end{center}
\end{figure}

However it is instructive to compare the explicit schemes to the implicit schemes ({\tt Adaptive IEM} and {\tt Drift Implicit}). We take $r=-8$ and $\sigma=3$ so that the explicit
Euler method is unstable for $\Dt=0.25$ and $\Dt=0.5$.
In Fig. \ref{fig:GBM} we plot 
the error squared, $|u(t_n)-Y_n|^2$, of two sample paths one with $\hmax=0.25$ (a) and
$\hmax=0.5$ (b).  Although the tamed and projected schemes control growth from the
linear \noteB{instability, we observe that this control} can come at a price of bounded oscillations.

\subsection{1D stochastic Ginzburg-Landau}\label{sec:GL}
The 1D stochastic Ginzburg-Landau SDE is a classic example with a
cubic nonlinearity in the drift and linear diffusion
\begin{equation}\label{eq:SGL}
dX(t)=aX(t)[b-X(t)^2]dt + cX(t)dW(t),\quad t\geq 0.
\end{equation}

\begin{figure}
  \begin{center}
    (a) \hspace{0.48\textwidth} (b)\\
       \includegraphics[width=0.48\textwidth,height=0.25\textheight]{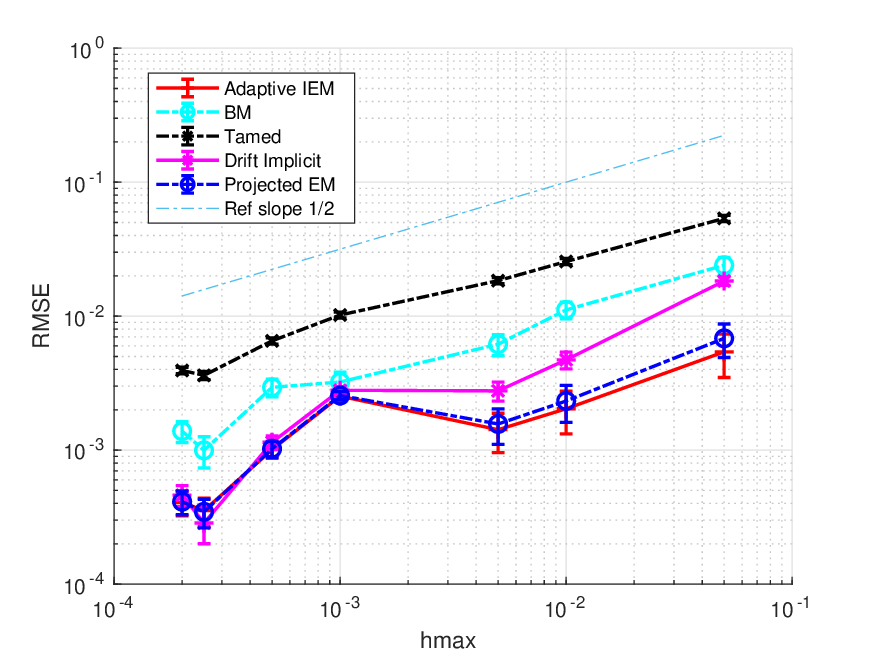}
    \includegraphics[width=0.48\textwidth,height=0.25\textheight]{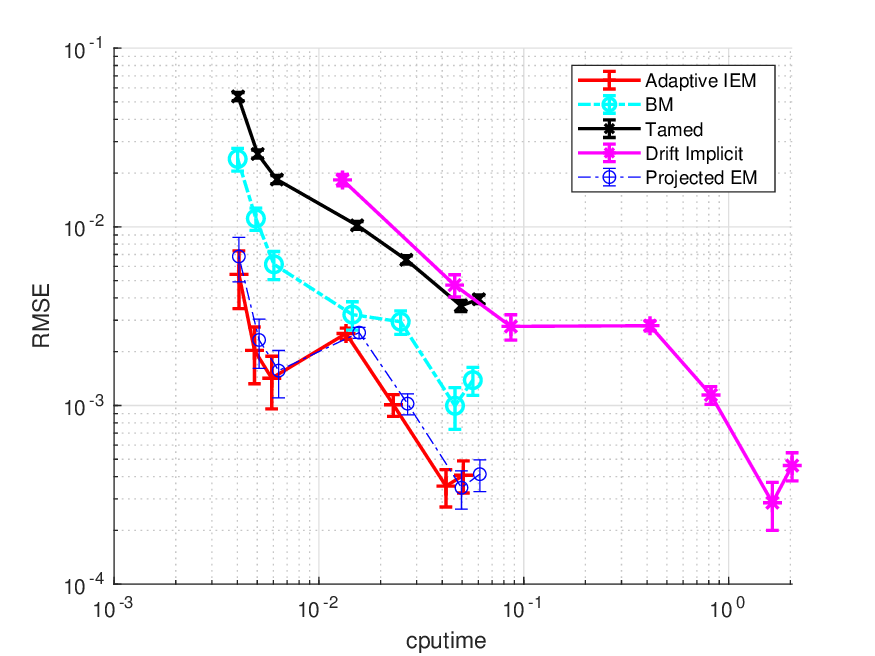}
    \caption{Convergence and efficiency of methods applied to the
      stochastic Ginzburg Landau equation \eqref{eq:SGL}. We compare
      RMSE at $T=1$ against $\hmax$ in (a) and efficiency (RMSE vs cputime) in (b). 
      }
      \label{fig:Cubic1}
  \end{center}
  
\end{figure}

We take here parameter values as in \cite[Example 4.7]{Mao15}, $a=0.1$, $b=1$ and $c=0.2$, $x(0)=2$, and solve to $T=1$. 
We see in Fig. \ref{fig:Cubic1} that all the methods demonstrate
convergence and that {\tt Adaptive IEM} and {\tt Projected EM}
are similar in convergence and efficiency. Neither the adaptive nor drift-implicit schemes used the
backstop method. 

\subsection{Stochastic volatility system.}\label{sec:SVS}
We consider an extension of the $3/2$-volatility model to
two dimensions as in \cite{Sabinas2016}
\begin{equation}\label{eq:SVol}
dX(t)=\lambda X(t)[\mu-|X(t)|]dt + \Sigma|X(t)|^{3/2} dW(t),\quad t\geq 0,
\end{equation}
with $\lambda=2.5$, $\mu=1$, $X(0)=a[2,2]^T$, $a=1,10,100$, and  
$$\Sigma=\left( \begin{array}{cc} \frac{2}{\sqrt{10}} &  \frac{1}{\sqrt{10}} \\
\frac{1}{\sqrt{10}} & \frac{2}{\sqrt{10}} 
\end{array}\right).$$

\begin{figure}
  \begin{center}
    (a) \hspace{0.48\textwidth} (b)\\
     \includegraphics[width=0.48\textwidth,height=0.25\textheight]{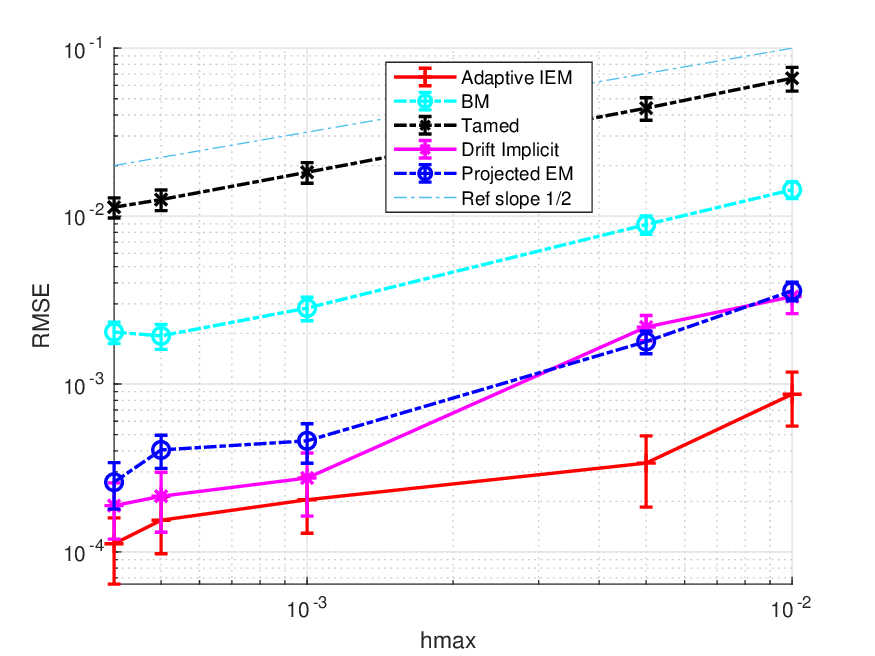}
        \includegraphics[width=0.48\textwidth,height=0.25\textheight]{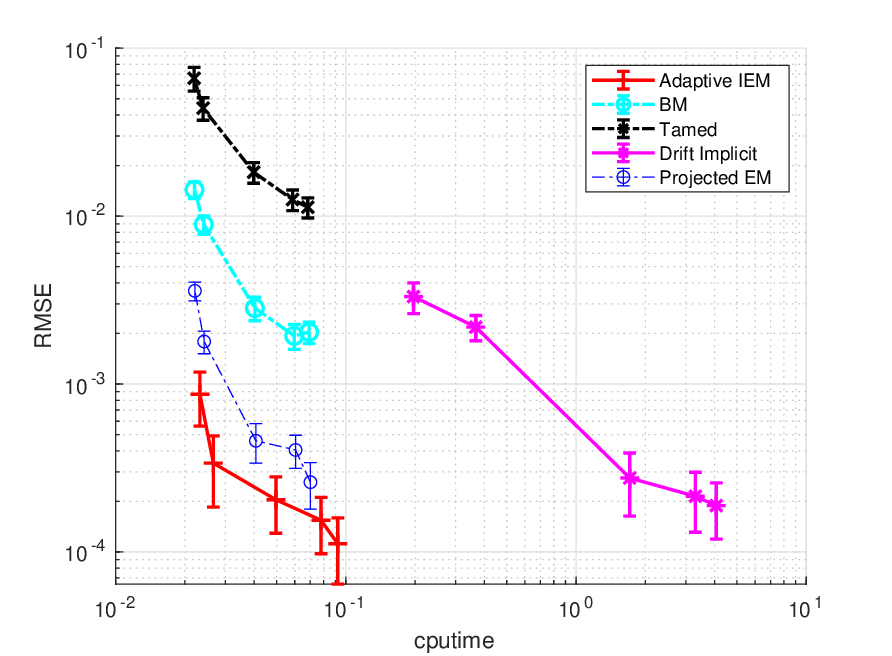}
    \caption{Convergence of methods applied to the stochastic volatility system \eqref{eq:SVol}. We compare RMSE at $T=1$ against $\hmax$ in (a) and efficiency (RMSE vs cputime) in (b).}
    \label{fig:StochVol2}
  \end{center}
  
\end{figure}

\begin{figure}
  \begin{center}
    (a) \hspace{0.48\textwidth} (b)\\
     \includegraphics[width=0.48\textwidth,height=0.25\textheight]{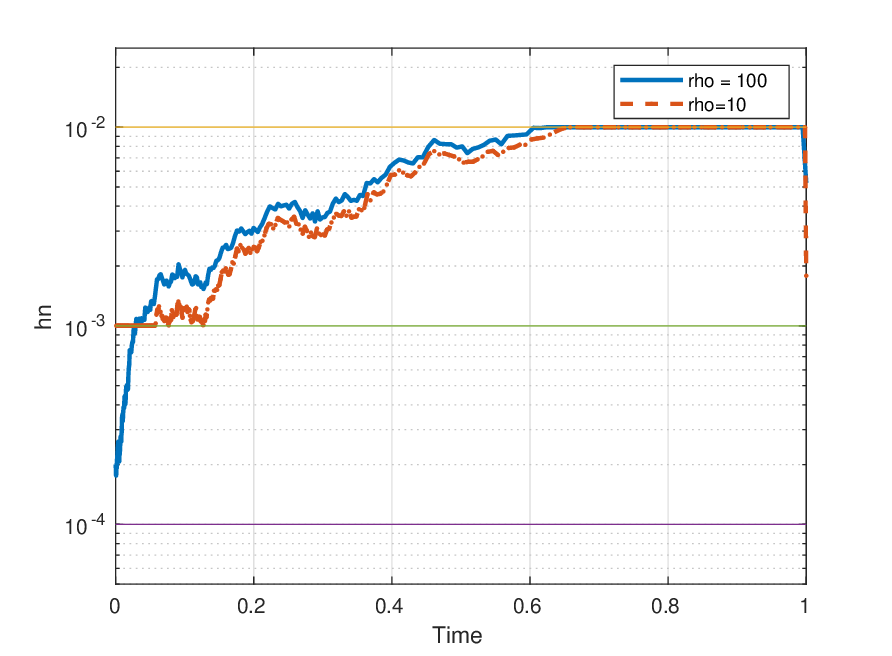}
        \includegraphics[width=0.48\textwidth,height=0.25\textheight]{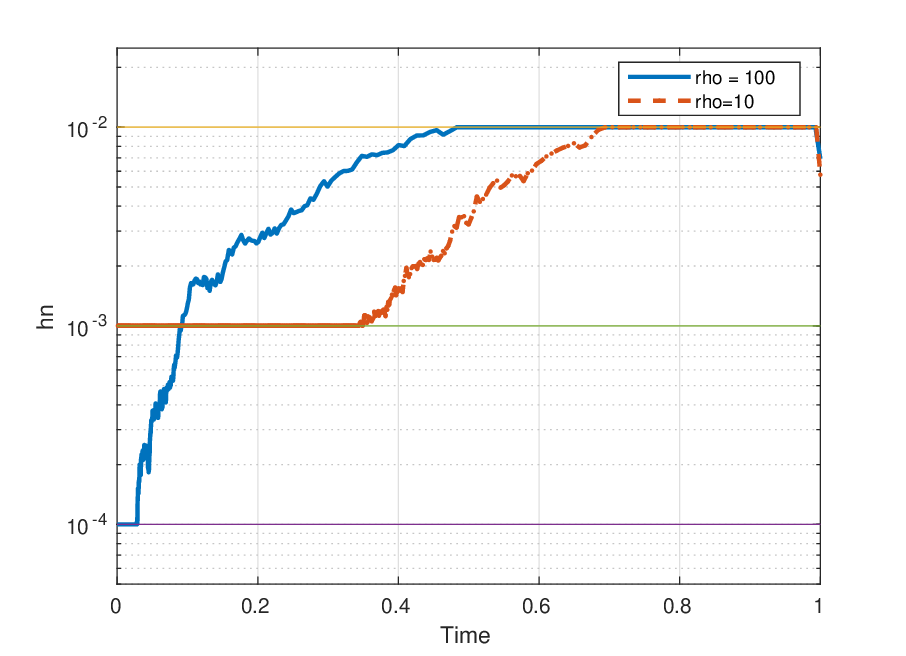}
    \caption{Selected time steps $h_n$ with $\hmax=0.01$ and for $\rho=10,100$. In (a) $X(0)=[20,20]^T$ and in (b) $X(0)=[200,200]^T$.}\label{fig:SVDt}
  \end{center}
  
\end{figure}

We see in Fig. \ref{fig:StochVol2} that 
all the methods demonstrate convergence but that {\tt BM} and {\tt Tamed} have a larger error constant and 
the adaptive method {\tt \noteB{Adaptive} IEM} is the most efficient. The initial data taken was $X(0)=[2,2]^T$ and 
the  backstop method was not used for either drift implicit or adaptive methods (as for \eqref{eq:SGL}). 
In Fig. \ref{fig:SVDt} we examine the time steps $h_n$ for a single noise path with the same value of $\hmax=0.01$ but with $\rho=10$ and $\rho=100$ corresponding to $\hmin=10^{-3}$ and $\hmin=10^{-4}$. In (a) we have initial data $X(0)=[20,20]^T$ and in (b) $X(0)=[200,200]^T$. In (a) we observe that with $\rho=100$ the backstop method is not used at all (but it is required for $\rho=10$) whereas in (b), to deal with the larger initial data, the backstop is used in both cases. As time progresses the time-step taken increases until it reaches $\hmax$. 

These observations suggest that practitioners who apply a standard explicit or semi-implicit Euler-Maruyama scheme over a uniform mesh with a step size sufficiently
small (e.g. close to $h_{\text{min}}$ with large $\rho$) may rarely encounter the spurious coefficient responses that underlie the lack of strong convergence for the scheme.

\subsection{Finite difference approximation of an SPDE}\label{sec:SPDE}
Consider the SPDE
\begin{equation}\label{eq:SPDE}
du = \left[ \epsilon u_{xx} + \eta u + u^3 - \lambda u^5\right] dt +
\sigma u^2d\widehat{W}
\end{equation}
with $t\geq 0$, $x\in[0,1]$ and zero Dirichlet boundary conditions.
We take initial data $u_0(x)=2\sin(\pi x)$, $\sigma=0.2$, $\eta=11$,
$\lambda=2$ and trace class noise $\widehat W$
$$\widehat W(x,t)=\sum_{j=1}^{m} j^{-3/2}\sin(j\pi x) W_j(t),\quad t\geq 0,\quad x\in[0,1],$$
for some $m\in \mathbb{N}\setminus\{0\}$ and where $W_j(t)$ are mutually independent standard Brownian motions.

We introduce a grid of $d+2$ uniformly spaced points $x_k=k\Dx$ on $[0,1]$ for $k=1,\ldots,d+2$.
Then the finite difference approximation in space leads to a system of $d$ SDEs:
\begin{equation}\label{eq:FDASPDE}
d\vecu(t) = \left[ \epsilon A\vecu(t) + \eta \vecu(t) + \vecu(t)^3 - \lambda \vecu(t)^5 \right] dt +
\sigma \vecu(t)^2 d\vec{W}(t),\quad t\geq 0,
\end{equation}
where we denote $\vecu:=(u_1,u_2,\ldots,u_{d})^T$, $u_k(t)\approx u(x_k,t)$ and the noise process is
$$\vec{W}:=(\widehat{W}(x_1,t),\widehat{W}(x_2,t),\ldots,\widehat{W}(x_{d},t))^T.$$ 
The tri-diagonal matrix $A$ is the standard finite difference approximation to the Laplacian with zero Dirichlet boundary conditions given by
$$
A = \frac{1}{\Dx^2}\left( 
\begin{array}{ccccc}2 & -1 &  &  & \\ 
-1 & 2 & -1 & \\
& \ddots & \ddots & \ddots & \\
&        &        &    -1  & 2

\end{array} \right) \in \mathbb{R}^{ d\times d}.
$$
For further details on the finite difference approximation of SPDEs see \cite{LPS}.
To be able to compare different system sizes $d$ we scaled the Euclidean norm in the standard way (see for example \cite{LPS}) to approximate the function space norm $L^2([0,1])$.
This system of SDEs displays linear stiffness (similar to the
geometric Brownian motion) and nonlinear stiffness arising from the drift and diffusion coefficients.
The parameter $\epsilon$ then determines the degree of linear
stiffness. 
To examine convergence to the SDE system \eqref{eq:FDASPDE}  we take $\epsilon=0.1$, $T=1$ for  $d=m=10$ (Fig. \ref{fig:SPDE10}) and $d=m=100$ (Fig. \ref{fig:SPDE100}). 
\begin{figure}
  \begin{center}
    (a) \hspace{0.48\textwidth} (b)\\
    \includegraphics[width=0.48\textwidth,height=0.25\textheight]{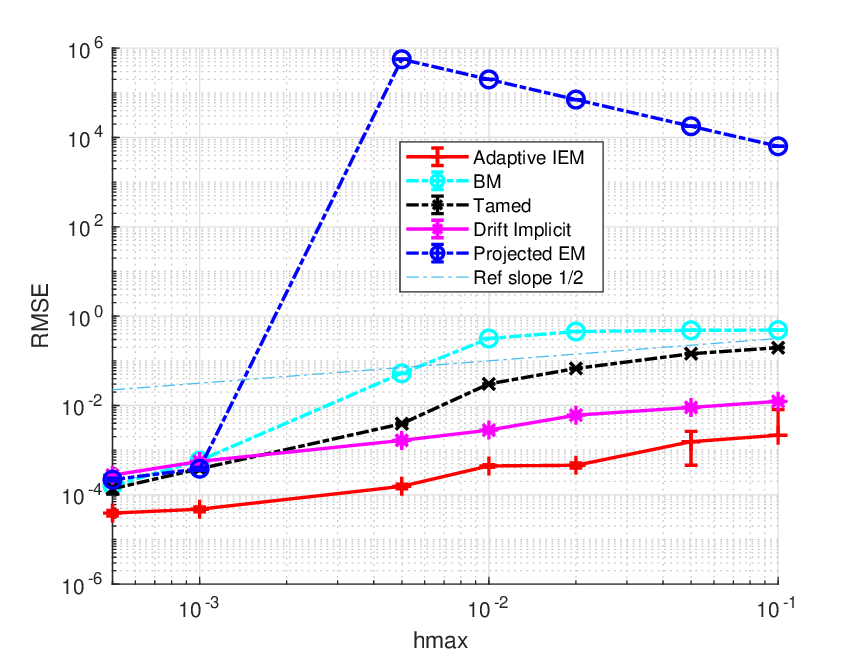}
    \includegraphics[width=0.48\textwidth,height=0.25\textheight]{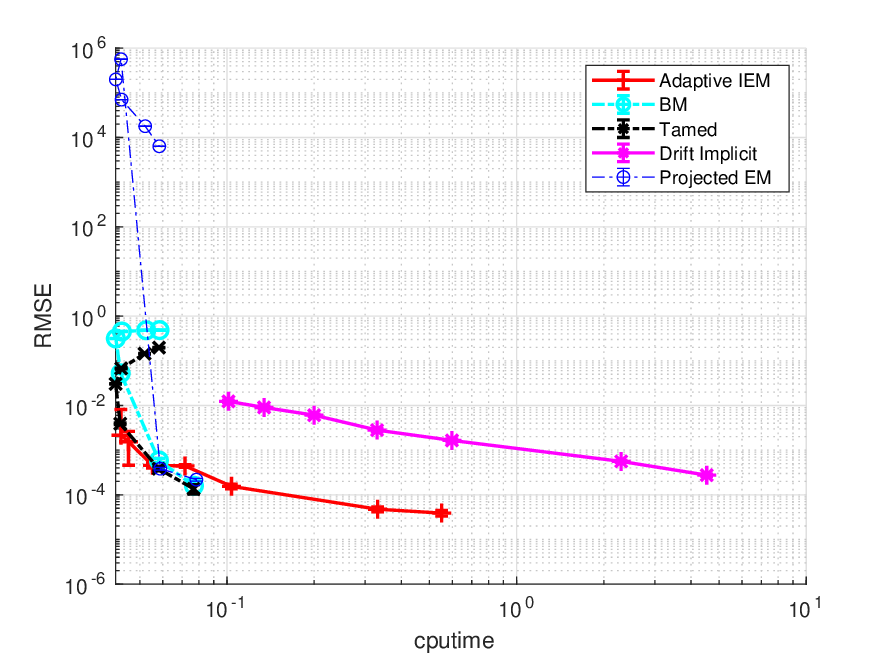}
    \caption{Convergence and efficiency for the methods applied to the
      finite difference  approximation of the SPDE given by
      \eqref{eq:FDASPDE} with $d=10$ (a) RMSE vs $\hmax$ (with reference line
      of slope 0.5) and (b) the efficiency (RMSE vs cputime).}\label{fig:SPDE10}
  \end{center}
\end{figure}

For the smaller system size ($d=10$) in Fig. \ref{fig:SPDE10} (a) we see all methods converging for $\hmax$ sufficiently small, although  {\tt Projected EM} initially diverges. We see in (b) that {\tt Adaptive IEM} is more efficient than {\tt Drift Implicit} and is similar to the other explicit schemes. When $m=100$ {\tt Projected EM} is no longer seen to converge on this range of $h$ and so is not plotted in Fig. \ref{fig:SPDE100}. In (a) we now see that the {\tt Drift Implicit} is more accurate \noteB{for a given $h_{\max}$} and from (b) that it is comparable in efficiency with {\tt Adaptive IEM}.
\begin{figure}
  \begin{center}
        (a) \hspace{0.48\textwidth} (b)\\   
    \includegraphics[width=0.48\textwidth,height=0.25\textheight]{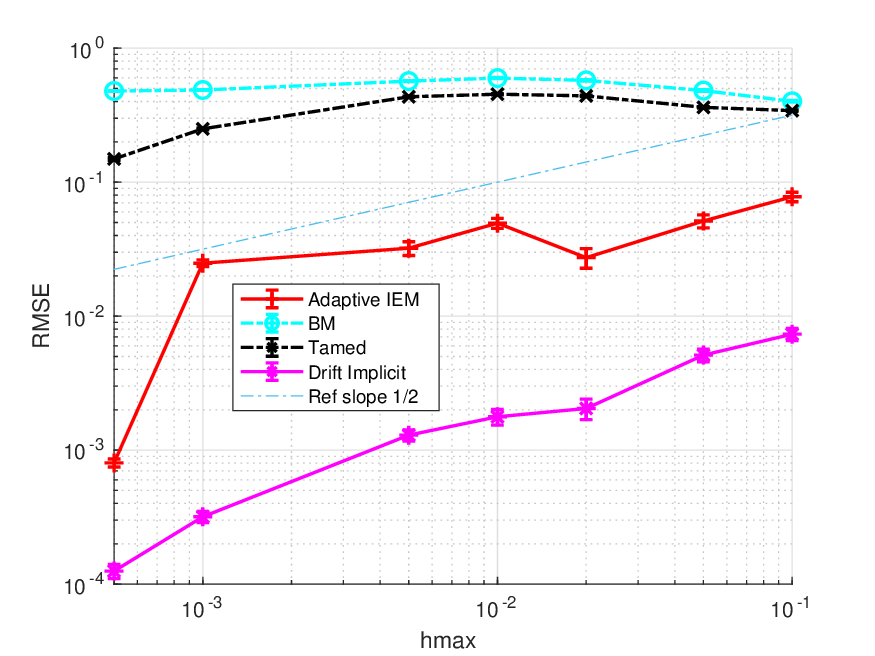}
    \includegraphics[width=0.48\textwidth,height=0.25\textheight]{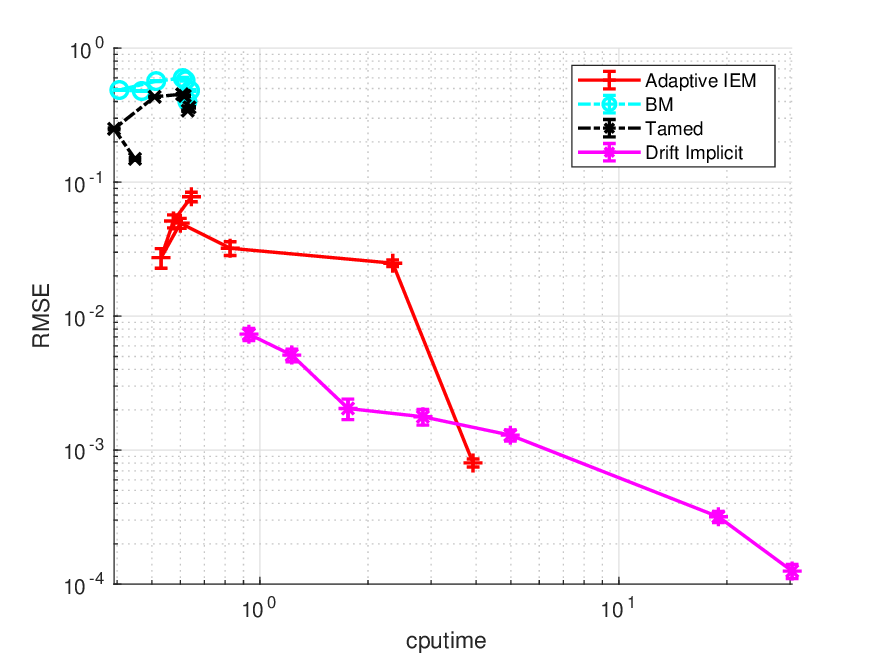}
    \caption{Convergence and efficiency for the methods applied to the
      finite difference approximation of the SPDE given by
      \eqref{eq:FDASPDE} with $d=100$ (a) RMSE vs $\hmax$ (with reference line
      of slope 0.5) and (b) the efficiency (RMSE vs cputime).}\label{fig:SPDE100}
  \end{center}
  
\end{figure}

\section{Proofs of Technical Results}\label{app:technical}
In this section we frequently use the inequality $\|a+b\|^p\leq 2^{p}(\|a\|^p+\|b\|^p)$, where $a,b\in\mathbb{R}^d$, and $p\in\mathbb{R}^+$, which follows from $
\|a+b\|^p\leq (\|a\|+\|b\|)^p\leq (2(\|a\|\vee \|b\|))^p=2^p(\|a\|^p\vee \|b\|^p)\leq 2^p(\|a\|^p+\|b\|^p)$.

\begin{proof}[Lemma \ref{lem:Xregularity}]
Fix $n\in\mathbb{N}$ and suppose that \noteB{$t_n<s\leq t_{n+1}$}. Then
$$X(s)-X(t_n)=\int_{t_n}^s [AX(r)+f(X(r))]dr+\int_{t_n}^s g(X(r))dW(r).$$
By the triangle inequality and ~\cite[Theorem 1.7.1]{Mao} (with conditioning on $\mathcal{F}_{t_n}$),
\begin{eqnarray*}
\lefteqn{\expect{\|X(s)-X(t_n)\|^p | \mathcal{F}_{t_n}}}\\ &\leq& 2^p \expect{\left\| \int_{t_n}^s [AX(r)+f(X(r))] dr\right\|^p \bigg| \mathcal{F}_{t_n}}
+ 2^p \expect{\left\|\int_{t_n}^s  g(X(r)) dW(r)\right\|^p \bigg| \mathcal{F}_{t_n}}\\
&\leq& 2^p  |s-t_n|^{p-1}\int_{t_n}^s \expect{\left\|AX(r)+f(X(r))\right\|^p | \mathcal{F}_{t_n}}dr \\
&&\qquad\qquad+ 2^{p/2}p^{p/2}(p-1)^{p/2} |s-t_n|^{p/2-1}\int_{t_n}^s \expect{\| g(X(r))\|^p_F | \mathcal{F}_{t_n}} dr,\quad a.s.
\end{eqnarray*}
Next, we apply \eqref{eq:fgBound}, Lemma \ref{lem:condSDEMoments}, and the fact that $\noteB{\|A\|_F^p}<\infty$, to get a.s.
\begin{eqnarray*}
\lefteqn{\expect{\|X(s)-X(t_n)\|^p | \mathcal{F}_{t_n}}}\\ 
&\leq& 2^{2p} |s-t_n|^{p-1} \int_{t_n}^s \expect{\noteB{\|A\|_F^p}\|X(r)\|^p+c_3^2(1+\|X(r)\|^{p\gamma_0+p}) | \mathcal{F}_{t_n}}dr\\ 
&&+2^{p/2}p^{p/2}(p-1)^{p/2}|s-t_n|^{p/2-1} \int_{t_n}^s\expect{ {c_4}^p(1+\|X(r)\|^{p\gamma_1+p})| \mathcal{F}_{t_n}}dr\\
&\leq&2^{2p}|s-t_n|^{p-1}\int_{t_n}^{s}\left[\noteB{\|A\|_F^p}(\nu_1+\nu_2\|X(t_n)\|^p)\right]dr\\
&&+2^{2p}|s-t_n|^{p-1}\int_{t_n}^{s}\left[c_3^2(1+(\nu_1+\nu_2\|X(t_n)\|^{p\gamma_0+p}))\right]dr\\
&&+2^{p/2}p^{p/2}(p-1)^{p/2}|s-t_n|^{p/2-1}\int_{t_n}^{s}\left(c_4^p(1+(\nu_1+\nu_2\|X(t_n)\|^{p\gamma_1+p}))\right)dr.
\end{eqnarray*}

Therefore, since $|s-t_n|\leq 1$, we can define an a.s. finite and $\mathcal{F}_{t_n}$-measurable random variable 
\begin{multline}\label{eq:Ln}
\bar{L}_{n}:=2^{2p}\noteB{\|A\|_F^p}(\nu_1+\nu_2\|X(t_n)\|^p+c_3^p(1+(\nu_1+\nu_2\|X(t_n)\|^{p\gamma_0+p})))\\+2^{p/2}p^{p/2}(p-1)^{p/2}c_4^2(1+(\nu_1+\nu_2\|X(t_n)\|^{p\gamma_1+p})),
\end{multline}
so that \eqref{eq:holdBound} holds.
\end{proof}

\begin{proof}[Lemma \ref{LPSbook}]
Part (i): Set $\gamma:=\gamma_0\vee \gamma_1$, where $\gamma_0,\gamma_1$ are as in Assumptions \ref{assum:polybounds} and \ref{assum:p}, and for $q=1,2,\ldots$ define
\[
A_{q}(s,t_n):=(1+2^{2\gamma}\|X(t_n)\|^{2\gamma}+2^{2\gamma}\|X(s)-X(t_n)\|^{2\gamma})^{2^{q-1}}\|X(s)-X(t_n)\|,
\]
which satisfies the relation
\[
A_{q}(s,t_n)^2=A_{q+1}(s,t_n)\|X(s)-X(t_n)\|, \quad q\in\mathbb{N},
\]
and, by Lemma \ref{lem:condSDEMoments}, the a.s. finite bound,
\begin{multline}\label{eq:AqBound}
\expect{A_{q}(s,t_n)\|X(s)-X(t_n)\||\mathcal{F}_{t_n}}\\
\leq \left(2^{(2^{q-1}+2)}(1+2^{(6\gamma+2^q)}\nu_1)\|X(t_n)\|^4+2^{(2\gamma+2^{q-1}+3)}\|X(t_n)\|^{4\gamma+2^q+4}\right.\\
\left.+2^{(6\gamma+2^q+2^{q-1}+2)}\|X(t_n)\|^{4\gamma+2^q+4}+2^{(6\gamma+2^q+2^{q-1}+2)}\nu_2\|X(t_n)\|^{4\gamma+2^q+4}\right)^{1/2},
\end{multline}
the right-hand-side of which we denote $(\bar \Upsilon_{q,n})^{1/2}$, where $\bar\Upsilon_{q,n}$ is an a.s. finite and $\mathcal{F}_{t_n}$-measurable random variable. Let $q\in\mathbb{N}\setminus\{0\}$ satisfy  $q>1-\log_2\varepsilon$. Then by \eqref{eq:fgDer} and $q$ successive applications of the Cauchy-Schwarz inequality, and \eqref{eq:holdBound} with $p=2$ in the statement of Lemma \ref{lem:Xregularity}, we get
\begin{eqnarray*}
\lefteqn{\mathbb{E}\left[\left\|\int_{t_n}^{t_{n+1}}R_f(s,t_n,X(t_n))ds\right\|^2 \bigg| \mathcal{F}_{t_n}\right]}\\
&\leq&h_{n}\int_{t_n}^{t_{n+1}}\expect{\|R_f(s,t_n,X(t_n))\|^2|\mathcal{F}_{t_n}}ds\\
&\leq& 2c_1^2h_{n}\int_{t_n}^{t_{n+1}}\expect{A_1(s,t_n)\|X(s)-X(t_n)\||\mathcal{F}_{t_n}}ds\\
&\leq& 2c_1^2h_{n}\int_{t_n}^{t_{n+1}}\left(\mathbb{E}[A_q(s,t_n)\|X(s)-X(t_n)\||\mathcal{F}_{t_n}]\right)^{1/(2^{q-1})}\\
&&\qquad\qquad\qquad\qquad\qquad\qquad\qquad\times(\bar L_{2,n}|s-t_n|)^{\sum_{i=2}^{q}1/(2^{i-1})}ds\\
&\leq&2c_1^2\bar \Upsilon_{q,n}^{1/(2^q)}\bar L_{2,n}^{\sum_{i=2}^{q}1/(2^{i-1})}h_{n}^{2+\sum_{i=2}^{q}1/(2^{i-1})}\\
&\leq& \bar\Lambda_{\varepsilon,n} h_{n}^{3-\varepsilon},\quad a.s.,
\end{eqnarray*}
where  $\bar\Lambda_{\varepsilon,n}:=2c_1^2\bar \Upsilon_{q,n}^{1/(2^q)}\bar L_{2,n}^{\sum_{i=2}^{q}1/(2^{i-1})}$ depends on $\varepsilon$ through $q$.

Part (ii): By the conditional form of the It\^o isometry, for $i=1,\ldots,m$,
\begin{equation*}
\expect{\left\|\int_{t_n}^{t_{n+1}}R_{g_i}(s,t_n,X(t_n))dW(s)\right\|^2\bigg|\mathcal{F}_{t_n}}=\int_{t_n}^{t_{n+1}}\expect{\|R_{g_i}(s,t_n,X(t_n))\|^2|\mathcal{F}_{t_n}}ds,
\end{equation*}
and the proof follows as in Part (i), with a reduction of one in the order of $h_{n}$. 

\noteB{Part (iii) holds as a special case of Part (i).}
Part (iv) follows by an application of the Cauchy-Schwarz inequality, followed by Jensen's inequality for the functions $(\cdot)^{1/(2^{q-1})}$ and $(\cdot)^{\sum_{i=2}^{q}1/(2^{i-1})}$ (both of which are concave over $\mathbb{R}^+$, by the second derivative test), to get
\begin{eqnarray*}
\expect{\bar\Lambda_{\varepsilon,n}}&=&2c_1^2\expect{\bar \Upsilon_{q,n}^{1/(2^q)}\bar{L}_{2,n}^{\sum_{i=2}^{q}1/(2^{i-1})}}\\
&\leq& 2c_1^2\sqrt{\expect{\bar \Upsilon_{q,n}^{1/(2^{q-1})}}}\sqrt{\expect{\left(\bar{L}_{2,n}^{\sum_{i=2}^{q}1/(2^{i-1})}\right)^{2}}}\\
&\leq& 2c_1^2\left(\expect{\bar \Upsilon_{q,n}}\right)^{1/(2^q)}\sqrt{\expect{(\bar{L}_{2,n}^2)^{\sum_{i=2}^{q}1/(2^{i-1})}}}\\
&\leq&2c_1^2\left(\expect{\bar \Upsilon_{q,n}}\right)^{1/(2^q)}\left(\expect{\bar{L}_{2,n}^2}\right)^{\sum_{i=2}^{q}1/(2^{i})},
\end{eqnarray*}
\noteB{which is finite under the conditions of Assumption \ref{assum:p}: $p$ satisfies \eqref{eq:pLowerBound}, and therefore by \eqref{eq:momStrong} the finiteness of $\expect{\bar \Upsilon_{q,n}}$ is ensured by \eqref{eq:AqBound} and that of $\expect{\bar L^2_n}$ is ensured by \eqref{eq:Ln}. }
\end{proof}

\begin{remark}\label{rem:noStrongMom}
By making $q$ successive applications of the Cauchy-Schwarz inequality in the proof of Lemma \ref{LPSbook} we separate the expectation of dependent random factors in $R_f$ and $R_g$ in such a way that the highest possible order of $h_{n}$ is achieved in the estimate, given the available finite moment bounds. This is necessary to ensure a polynomial order of strong convergence in the statement of Theorem \ref{thm:adaptConv}. If the diffusion coefficient $g$ is globally Lipschitz continuous then the resulting uniform bound on each $\|Dg_i(x)\|_F$, along with stronger moment bounds of the form \eqref{eq:momStrong}, sidesteps that requirement. In this case the statement of Lemma \ref{LPSbook}, and hence the statement of Theorem \ref{thm:adaptConv}, would hold with $\varepsilon=0$ (and order constant independent of $q$, and therefore $\varepsilon$), giving an order of strong convergence of $1/2$ for the semi-implicit method with backstop \eqref{eq:finalScheme}, using an admissible timestepping strategy. If we then set $A=0$ in \eqref{eq:SDE}, our method becomes explicit and we recover the main result of \cite{KeLo2016}.
\end{remark}

\section{Conclusion}\label{sec:concl}
The discretisation of SDEs with non-Lipschitz drift and diffusion coefficients is a challenging numerical
problem. We have proved strong convergence for both adaptive semi-implicit and
explicit Euler schemes, and presented numerical results that indicate
the semi-implicit variant is well suited as a general purpose solver, being
more robust than several competing explicit fixed-step methods 
and more efficient than the drift implicit method. 

Both the drift implicit and the adaptive scheme make use of a
backstop method which is triggered when the adaptive timestepping strategy attempts to select a timestep at the minimum stepsize $h_{\text{min}}$. Our numerical experiments indicate that, for an appropriate choice of $\rho$, $h_{\text{min}}$
may be achieved only rarely (if at all). It may be possible to characterise the probability of this occurrence and, if it can be bounded appropriately, a strong convergence result may
be possible for a numerical method of the form \eqref{eq:Scheme} that
does not rely on a backstop method (provided $T$ is reached in a
finite number of steps). A step in this direction may be found in \cite{KeLoSu2019}.

SDEs where the drift coefficient is both positive and non-globally Lipschitz continuous are not covered by the analysis in this article, though adaptive meshes have been used to reproduce 
positivity of solutions with high probability and a.s. stability and instability of equilibria in \cite{KRR2017} (informed by the approach of Liu \& Mao~\cite{LM}). We are unaware of any strong convergence results for such equations.

Finally, since our analysis relies upon the boundedness of \noteB{$\|A\|_F$},
and since the error constant in the strong convergence estimate
increases without bound with the number of independent noise
terms $m$, the results of the article do not automatically extend to
SPDEs. This setting is now considered in \cite{LoStu2019}.

\section*{Acknowledgements}
Gabriel Lord was partially funded by EPSRC grant EP/K034154/1.
The authors are grateful to Stuart Campbell of Heriot-Watt University,  and Alexandra Rodkina of the University of the West Indies at Mona, for useful discussions in preparing this work. \noteB{We also wish to acknowledge the careful reading and detailed feedback provided by anonymous referees, which significantly contributed to the present form of the manuscript.}
\bibliographystyle{siamplain}

\end{document}